\newtheorem{thm}{Theorem}[section]
\newtheorem{cor}[thm]{Corollary}
\newtheorem{lem}[thm]{Lemma}
\newtheorem{prop}[thm]{Proposition}
\newtheorem{defn}[thm]{Definition}
\newtheorem{rem}[thm]{Remark}
\theoremstyle{definition}
\numberwithin{equation}{section}
\renewcommand{\Re}{\hbox{Re}\,}
\renewcommand{\Im}{\hbox{Im}\,}
\newcommand{\R}{\mathbb{R}}
\newcommand{\supp}{\operatorname{supp}}
\def\hat{\widehat}
\def\tilde{\widetilde}
\def \bfo {\begin {eqnarray*} }
\def \efo {\end {eqnarray*} }
\def \ba {\begin {eqnarray*} }
\def \ea {\end {eqnarray*} }
\def \beq {\begin {eqnarray}}
\def \eeq {\end {eqnarray}}
\def \supp {\hbox{supp }}
\def \p {\partial}
\def\hat{\widehat}
\def\tilde{\widetilde}
\def \bfo {\begin {eqnarray*} }
\def \efo {\end {eqnarray*} }
\def \ba {\begin {eqnarray*} }
\def \ea {\end {eqnarray*} }
\def \beq {\begin {eqnarray}}
\def \eeq {\end {eqnarray}}
\def \supp {\hbox{supp }}
\def \p {\partial}
\begin{document}

 \title[Reconstructing a potential perturbation]{Reconstructing a potential perturbation of the biharmonic operator on transversally anisotropic manifolds}

\author[Yan]{Lili Yan}

\address
        {Lili Yan, Department of Mathematics\\
         University of California, Irvine\\ 
         CA 92697-3875, USA }

\email{liliy6@uci.edu}

\maketitle

\begin{abstract} 
We prove that a continuous potential $q$ can be constructively determined from the knowledge of the Dirichlet--to--Neumann map for the perturbed biharmonic operator $\Delta_g^2+q$ on a conformally transversally anisotropic Riemannian manifold of dimension $\ge 3$ with boundary, assuming that the geodesic ray transform on the transversal manifold is constructively invertible. This is a constructive counterpart of the uniqueness result of \cite{Yan_2020}. In particular, our result is applicable and new in the case of  smooth bounded domains in the $3$--dimensional  Euclidean space  as well as in the case of $3$--dimensional admissible manifolds.  

\end{abstract}

\section{Introduction and statement of results}

Let  $(M,g)$ be a smooth compact oriented Riemannian manifold of dimension $n\ge 3$ with smooth boundary $\p M$. Let $\gamma$ be the Dirichlet trace operator defined by 
\begin{equation}
\label{int_trace}
\gamma: H^2(M^{\text{int}})\to H^{3/2}(\p M)\times H^{1/2}(\p M), \quad \gamma u=(u|_{\p M}, \p_\nu u|_{\p M}),
\end{equation}
which is bounded and surjective, see \cite[Theorem 9.5]{Grubb_book}. Here and in what follows $M^{\text{int}}=M\setminus \p M$,  $H^s(M^{\text{int}})$ and $H^{s}(\p M)$, $s\in \R$, are the standard $L^2$--based Sobolev spaces on $M^{\text{int}}$ and its boundary $\p M$, respectively, and $\nu$ is the exterior unit normal to $\p M$. We also let $H^2_0(M^{\text{int}})=\{u\in H^2(M^{\text{int}}): \gamma u=0\}$. Let $-\Delta_g=-\Delta$ be the Laplace--Beltrami operator on $M$, and let $\Delta^2$ be the biharmonic operator on $M$. Let $q\in C(M)$. By standard arguments, see for instance \cite[Appendix A]{Krup_Uhlmann_2016}, the operator
\begin{equation}
\label{int_1}
\Delta^2+q: H^2_0(M^{\text{int}})\to H^{-2}(M^{\text{int}})=(H^2_0(M^{\text{int}}))',
\end{equation}
is Fredholm of index zero and has a discrete spectrum. We shall assume throughout the paper that 
\begin{itemize}
\item[(A)] $0$ is not in the spectrum of the operator \eqref{int_1}.
\end{itemize}
Thus, for any $f=(f_0,f_1)\in H^{3/2}(\p M)\times H^{1/2}(\p M)$, the Dirichlet problem
\begin{equation}
\label{int_2}
\begin{cases}
(\Delta^2+q)u=0 & \text{in}\quad M^{\text{int}},\\
\gamma u=f & \text{on}\quad \p M,
\end{cases}
\end{equation}
has a unique solution $u\in H^2(M^{\text{int}})$, depending continuously on $f$. Physically, the Dirichlet boundary condition in \eqref{int_2} corresponds to the clamped plate equation, see  \cite{Gazzola_2010}.
 We define the Dirichlet--to--Neumann map  $\Lambda_q$ by 
\begin{equation}
\label{int_2_DN_map}
\langle \Lambda_q f,g \rangle_{H^{-3/2}(\p M)\times H^{-1/2}(\p M), H^{3/2}(\p M)\times H^{1/2}(\p M)}=\int_M(\Delta u)(\Delta v)dV+\int_M quvdV, 
\end{equation}
where $g=(g_0,g_1)\in H^{3/2}(\p M)\times H^{1/2}(\p M)$, $v\in H^2(M^{\text{int}})$ is such that $\gamma v=g$, and $u$ is the solution to \eqref{int_2}.  The linear map $\Lambda_q$ is well defined and 
\[
\Lambda_q: H^{3/2}(\p M)\times H^{1/2}(\p M)\to H^{-3/2}(\p M)\times H^{-1/2}(\p M)
\]
is continuous, see \cite[Appendix A]{Krup_Uhlmann_2016}.  This corresponds to the fact that in the weak sense we have $\Lambda_q f=(-\p_\nu(\Delta u)|_{\p M}, \Delta u|_{\p M})$.  

Note that working with solutions $u\in H^4(M^{\text{int}})$ of the equation $ (\Delta^2+q)u=0$, the explicit description for the Laplacian in the boundary normal coordinates, see \eqref{eq_2_1} below, together with boundary elliptic regularity, see \cite[Theorem 11.14]{Grubb_book}, shows that the knowledge of the graph of the Dirichlet--to--Neumann map  $\Lambda_q$, $\{(f, \Lambda_q f): f\in H^{\frac{7}{2}}(\p M)\times H^{\frac{5}{2}}(\p M)\}$ is equivalent to the knowledge of the set of the Cauchy data,
\[
\{(u|_{\p M}, \p_\nu u|_{\p M}, \p_\nu^2 u|_{\p M}, \p_\nu^3 u|_{\p M}): \, u\in H^4(M^{\text{int}}), \, (\Delta^2+q)u=0\text{ in } M^{\text{int}}\}.
\]

The inverse boundary problem for a potential perturbation of the biharmonic operator is to determine the potential $q$ in $M$ from the knowledge of the Dirichlet--to--Neumann map $\Lambda_q$.  In the case of domains in the Euclidean space $\R^n$ with $n\ge 3$, this problem was solved in \cite{Ikehata_1991}, \cite{Isakov_1991} showing that the bounded potential $q$ can indeed be recovered from the knowledge of the Dirichlet--to--Neumann map $\Lambda_q$, see \cite{Krup_Uhlmann_2016} for the case of unbounded potentials.  We refer to \cite{Krup_Lassas_Uhlmann_2014}, \cite{Krup_Lassas_Uhlmann_2012}
where the inverse boundary problem of determination of a first order perturbation of the biharmonic operator was studied in the Euclidean case, see also \cite{Brown_Gauthier_2021}, \cite{Assylbekov_2016}, \cite{Assylbekov_2016_corrigendum},  \cite{Assylbekov_Iyer_2019} for the case of non-smooth perturbations, and  \cite{Bhattacharyya_Ghosh_2020}, \cite{Ghosh_Krishnan_2016} for the case of second order perturbations.

Going beyond the Euclidean setting, the global uniqueness in the inverse boundary problem for zero and first order perturbations of the biharmonic operator was only obtained in the case when the manifold $(M,g)$ is admissible in  \cite{Assylbekov_Yang_2017}, see Definition \ref{def_admissible} below, and in the more general case when $(M,g)$ is CTA  (conformally transversally anisotropic, see Definitions \ref{def_CTA}) with the injective geodesic X-ray transform on the transversal manifold $(M_0, g_0)$ in \cite{Yan_2020}. The works  \cite{Assylbekov_Yang_2017}  and \cite{Yan_2020} are extensions of the fundamental works \cite{DKSaloU_2009} and \cite{DKurylevLS_2016} which initiated this study in the case of perturbations of the Laplacian. 

\begin{defn}
\label{def_CTA}
A compact Riemannian manifold $(M,g)$ of dimension $n\ge 3$ with boundary $\p M$ is called conformally transversally anisotropic (CTA) if $M\subset\subset \R\times M_0^{\text{int}}$  where $g= c(e \oplus g_0)$, $(\R,e)$ is the Euclidean real line, $(M_0,g_0)$ is a smooth compact $(n-1)$--dimensional manifold with smooth boundary, called the transversal manifold, and $c\in C^\infty(M)$ is a positive function. 
\end{defn}

\begin{defn}
\label{def_admissible}
A compact Riemannian manifold $(M,g)$ of dimension $n\ge 3$ with boundary $\p M$ is called admissible if it is CTA and the transversal manifold $(M_0,g_0)$ is simple, meaning that 
for any $p\in M_0$, the exponential map $\exp_p$ with its maximal domain of definition in $T_p M_0$ is a diffeomorphism onto $M_0$, and $\p M_0$ is strictly convex.

\end{defn}

The proofs of the global uniqueness results in the works \cite{DKSaloU_2009}, \cite{DKurylevLS_2016}  \cite{Assylbekov_Yang_2017}, \cite{Yan_2020} rely on construction of complex geometric optics solutions based on the techniques of Carleman estimates with limiting Carleman weights. Thanks to the work \cite{DKSaloU_2009}, we know that the property of being a CTA manifold guarantees the existence of limiting Carleman weights.

Once uniqueness results for inverse boundary problems have been established, one is interested in upgrading them to a  reconstruction procedure. The reconstruction of a potential perturbation of the Laplacian from boundary measurements in the Euclidian space was obtained in the pioneering works \cite{Nachman_1988} and \cite{Novikov_1988}, see also \cite{Henkin_Novikov_1987}. We refer to \cite{Nachman_Street_2010} for reconstruction in the case of partial data inverse boundary problems. In the case of admissible manifolds, a reconstruction procedure for a potential perturbation of the Laplacian was given in \cite{Kenig_Salo_Uhlmann_2011}, complementing the uniqueness result of \cite{DKSaloU_2009}, see also \cite{Assylbekov_2017}.  In the case of more general CTA manifolds whose transversal manifolds enjoy the constructive invertibility of the geodesic ray transform, a reconstruction procedure for a potential perturbation of the Laplacian was established in \cite{Feizm_K_Oksan_Uhl}, complementing the uniqueness result of \cite{DKurylevLS_2016}.
We refer to  \cite{Belishev_2011}, \cite{Belishev_2015} for the reconstruction of a Riemannian manifold from the dynamical data.

Turning the attention to inverse boundary problems for a potential perturbation of the biharmonic operator, to the best of our knowledge, there is no reconstruction procedure available in the literature and the purpose of this paper is to provide such a reconstruction procedure. Our result will be stated in the most general setting possible, i.e. on a CTA manifold whose transversal manifold enjoys the constructive invertibility of the geodesic ray transform, but it is applicable and new already in the case of  smooth bounded domains in the $3$--dimensional Euclidean space and in the case of $3$--dimensional admissible manifolds. To state our result, we shall need the following definition. 
\begin{defn}
We say that the geodesic ray transform on the transversal manifold $(M_0,g_0)$ is constructively invertible  if any function $f\in C(M_0)$ can be reconstructed from the knowledge of its integrals over all non-tangential geodesics in $M_0$. Here a unit speed geodesic $\gamma:[0,L]\to M_0$ is called non-tangential if $\dot{\gamma}(0), \dot{\gamma}(L)$ are non-tangential vectors on $\p M_0$ and $\gamma(t)\in M_0^{\emph{\text{int}}}$ for all $0<t<L$.  
\end{defn}

Our main result is as follows, and it gives a constructive counterpart of the uniqueness result of \cite{Yan_2020}. 
\begin{thm}
\label{thm_main}
Let $(M,g)$ be a given CTA manifold and assume that the geodesic ray transform on the transversal manifold $(M_0,g_0)$ is constructively invertible. Let $q\in C(M)$ be such that assumption (A) is satisfied. Then the knowledge of $\Lambda_{q}$ determines $q$ in $M$  constructively. 
\end{thm}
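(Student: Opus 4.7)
The plan is to make the uniqueness argument of \cite{Yan_2020} constructive step by step, in the same spirit that \cite{Kenig_Salo_Uhlmann_2011} and \cite{Feizm_K_Oksan_Uhl} render algorithmic the uniqueness proofs of \cite{DKSaloU_2009} and \cite{DKurylevLS_2016} for a potential perturbation of the Laplacian. Using the conformal invariance of the Dirichlet--to--Neumann map for the biharmonic operator established in \cite{Yan_2020}, I would first reduce to the case $c\equiv 1$, so that $M\subset\subset \R\times M_0^{\text{int}}$ with $g=e\oplus g_0$, and $\varphi(x)=x_1$ is a limiting Carleman weight.

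The next step is the explicit construction of complex geometric optics (CGO) solutions to $(\Delta^2+q)u=0$ of the form
\[
u_s = e^{-s(x_1+i\psi(x'))}\bigl(v_0 + s^{-1}v_1 + r_s\bigr),
\]
where the leading amplitude $v_0(x_1,x')=e^{i\lambda x_1}w_s(x')$ is the product of an oscillating factor in $x_1$ and a Gaussian beam quasimode $w_s$ on $(M_0,g_0)$ concentrating on a prescribed non-tangential geodesic $\eta$, while $r_s$ is an $O(s^{-1})$ remainder in the semiclassical $H^2$--norm provided by the Carleman estimate with gain of two derivatives of \cite{Yan_2020}. Each of these ingredients is produced by an explicit algorithm from the parameters $(\lambda,\eta)$. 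A second CGO $\tilde u_s$ attached to the conjugate weight $-\varphi$ and to a second non-tangential geodesic is constructed analogously.

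The principal obstacle is the reconstruction of the Dirichlet trace $\gamma u_s$ of the CGO from $\Lambda_q$ alone, since the CGO is not uniquely determined by its PDE. Following the scheme of \cite{Nachman_1988}, \cite{Kenig_Salo_Uhlmann_2011} and \cite{Feizm_K_Oksan_Uhl}, I would embed $M$ into a slightly larger CTA manifold $\hat M$, construct a Faddeev-type Green's function for the conjugated biharmonic operator $e^{sx_1}\Delta^2 e^{-sx_1}$ on $\hat M$ via a global Carleman estimate, and rewrite Green's third identity on $M$ as a Fredholm boundary integral equation of the form
\[
\bigl(I - \cS_s(\Lambda_q - \Lambda_0)\bigr)\gamma u_s = \gamma U_s,
\]
where $U_s$ is an explicitly computable free CGO on $\hat M$ and $\cS_s$ is a boundary operator built from the Green's function and the free Dirichlet--to--Neumann map $\Lambda_0$. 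Unique solvability of this equation follows from Fredholm theory together with the injectivity statement underlying \cite{Yan_2020}, and for $s$ sufficiently large a Neumann series converges; in either case $\gamma u_s$ is recovered, and hence the full Cauchy data $(\gamma u_s,\Lambda_q\gamma u_s)$.

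With the Cauchy data of $u_s$ and $\tilde u_s$ at hand, a Green's formula identity expresses
\[
\int_M q\, u_s\, \tilde u_s\, dV
\]
in terms of these traces and of a pairing with $\Lambda_q$. Passing to the limit $s\to\infty$, the leading-order asymptotics of $u_s\tilde u_s$ localize the integrand along $\eta$ and produce the integral of $\hat q(\lambda,\cdot)$, the partial Fourier transform of $q$ in $x_1$, over $\eta$. Varying $\lambda\in\R$ and $\eta$, the hypothesized constructive invertibility of the geodesic ray transform on $(M_0,g_0)$ yields $\hat q(\lambda,\cdot)$ for every $\lambda$, and a final inverse Fourier transform in $x_1$ delivers $q$ on $M$.
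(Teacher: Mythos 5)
Your overall architecture matches the paper's: an integral identity pairing $\Lambda_q-\Lambda_0$ against products of solutions, CGOs built from Gaussian beam quasimodes on $\R\times M_0$, a Nachman--Street boundary integral equation $(1+h^4S_\varphi(\Lambda_q-\Lambda_0))\gamma u_1=\gamma u_0$ solved by a Neumann series to recover the trace, and a passage to a geodesic ray transform on $M_0$. However, there are three concrete gaps. First, the opening reduction to $c\equiv 1$ via ``conformal invariance of the Dirichlet--to--Neumann map for the biharmonic operator'' is not available: $\Delta_{cg}^2$ is not conformally covariant (the covariant fourth-order operator is the Paneitz operator, which differs from $\Delta^2$ by lower-order terms), and any such substitution would generate first-order perturbations that this theorem does not treat. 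The paper instead carries the conformal factor through the quasimode construction, and $c$ survives into the final identity as $\tilde q=qc$ with $dV=c^{n/2}dx_1dV_{g_0}$. Second, your limit is claimed to produce the \emph{unattenuated} geodesic ray transform of $\hat q(\lambda,\cdot)$, but with $s=\frac{1}{h}+i\lambda$ the quasimode concentration actually yields the \emph{attenuated} transform $\int_0^L e^{-2\lambda t}\,\hat{\tilde q}(2\lambda,\gamma(t))\,dt$ with constant attenuation $-2\lambda$; converting this into data for the hypothesized (unattenuated) constructive inversion requires an extra argument (as at the end of the proof of Theorem 1.4 in \cite{Feizm_K_Oksan_Uhl}, or the attenuated inversion of \cite{Salo_Uhlmann_2011} when $M_0$ is simple). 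Third, you omit the boundary reconstruction of $q|_{\p M}$: to rewrite $\int_M q u_1\overline{u_2}\,dV$ as an integral over $\R\times M_0$ and apply the concentration limit (which requires a continuous integrand on $M_0$), one must extend $q$ to a known continuous function on $\R\times M_0^{\text{int}}$, and this is exactly why the paper proves a separate boundary determination result (Theorem \ref{thm_boundary}).

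A smaller point of divergence: you propose building a Faddeev-type Green's function on an enlarged manifold $\hat M$, whereas the paper works directly on $M$, taking the Nachman--Street Green operator $G_\varphi$ for the conjugated Laplacian and observing that $G_\varphi^2$ is a right inverse for $P_\varphi^2$ with norm $\mathcal{O}(h^{-2})$, so that $h^4qG_\varphi^2$ is $\mathcal{O}(h^2)$ and the relevant operators invert by Neumann series. Your appeal to ``the injectivity statement underlying \cite{Yan_2020}'' for unique solvability of the boundary equation is not the right mechanism; what is actually needed is the bijectivity of the trace map $\gamma:H_q\to H^{-1/2}(\p M)\times H^{-3/2}(\p M)$ (which requires the $L^2$ solvability theory for the Dirichlet problem with negative-order boundary data) combined with the smallness of $h^4qG_\varphi^2$.
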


Combining Theorem \ref{thm_main} with the constructive invertibility of the geodesic ray transform on a simple two-dimensional Riemannian manifold, see \cite{Pestov_Uhlmann_2004}, \cite{Krishnan_2010}, \cite{Salo_Uhlmann_2011}, see also \cite{Monard_2014}, \cite{Monard_2014_2},  we obtain the following unconditional result. 
\begin{cor}
\label{cor_main}
Let $(M,g)$ be a given $3$--dimensional admissible manifold, and let $q\in C(M)$ be such that assumption (A) is satisfied. Then the knowledge of $\Lambda_{q}$ determines $q$ in $M$  constructively. 
\end{cor}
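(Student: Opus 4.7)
The plan is to adapt the constructive scheme of \cite{Feizm_K_Oksan_Uhl}, which treats a Schr\"odinger type perturbation of the Laplacian on a CTA manifold, to the fourth order setting of \cite{Yan_2020}. The procedure has three stages: (i) set up a Fredholm boundary integral equation whose unique solution is the full Cauchy data of a complex geometric optics (CGO) solution to $(\Delta_g^2+q)u=0$; (ii) combine this Cauchy data with an explicit reference biharmonic CGO in an integral identity coming from \eqref{int_2_DN_map}; and (iii) pass to the large parameter limit in that identity and extract $q$ by Fourier inversion in the Euclidean variable together with constructive inversion of the geodesic ray transform on $(M_0,g_0)$.

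The starting ingredient is a family of CGO solutions built from the limiting Carleman weight $\varphi(x)=x_1$ on $M\subset\subset \R\times M_0^{\text{int}}$. Factoring $\Delta_g^2+q=(-\Delta_g)(-\Delta_g)+q$ and iterating the semiclassical Carleman estimate for $-\Delta_g$ with weight $\varphi$, one obtains an $H^4$ Carleman estimate for $\Delta_g^2$ together with a constructively available solution operator for the conjugated equation. This yields CGO solutions of the form
$$u_s(x)=e^{-s(x_1+i\psi(x))}\big(a_0(x;s)+s^{-1}a_1(x;s)+r_s(x)\big),\quad s\to\infty,$$
where the leading amplitude $a_0$ is a Gaussian beam quasimode concentrating on a prescribed non-tangential geodesic in $M_0$ multiplied by a prescribed Fourier mode $e^{i\lambda x_1}$, the lower order amplitudes are produced by solving explicit transport equations, and $\|r_s\|_{H^2(M)}=O(s^{-1})$. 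The same scheme with $q$ replaced by $0$ produces a reference CGO $v_s$ with weight $+\varphi$ solving $\Delta_g^2 v_s=0$, whose Cauchy data on $\p M$ is known explicitly.

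For the boundary reconstruction I would follow Nachman's template adapted to the biharmonic operator. Embed $M$ into a slightly larger CTA manifold $\tilde M$ and form Green's third identity for $u_s$ against the Schwartz kernel of $(\Delta_{\tilde g}^2)^{-1}$ on $\tilde M$; restricting to $\p M$ and using the equivalence of $\Lambda_q$ with the full Cauchy data noted in the excerpt yields a Fredholm boundary integral equation of the second kind on $\p M$, whose unknown is the Cauchy data $(u_s|_{\p M},\p_\nu u_s|_{\p M},\p_\nu^2 u_s|_{\p M},\p_\nu^3 u_s|_{\p M})$ and whose source term is the explicit boundary contribution of the CGO ansatz. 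Unique solvability of this equation, hence the constructive determination of the boundary Cauchy data of $u_s$ from $\Lambda_q$, is guaranteed by the uniqueness result of \cite{Yan_2020} applied to the difference of two putative solutions.

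With the Cauchy data of $u_s$ in hand, the identity \eqref{int_2_DN_map} takes the form
$$\int_M q\,u_s v_s\,dV=\langle \Lambda_q\gamma u_s,\gamma v_s\rangle-\int_M \Delta_g u_s\,\Delta_g v_s\,dV,$$
whose right hand side is computable from $\Lambda_q$, the reconstructed Cauchy data of $u_s$, and the known $v_s$. The product $u_s v_s$ concentrates in $M_0$ on the prescribed non-tangential geodesic and oscillates in $x_1$ at the prescribed frequency $\lambda$, so passing to the limit $s\to\infty$ produces the partial Fourier transform in $x_1$ of the geodesic ray transform of $q$ evaluated along that geodesic at frequency $\lambda$. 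Varying $\lambda\in\R$ and the geodesic and applying Fourier inversion together with the assumed constructive inversion of the geodesic ray transform on $M_0$ recovers $q$ pointwise. The principal obstacle I foresee is the fourth order adaptation of the boundary integral equation: the Cauchy data now consists of four traces instead of two and the kernels involve up to third tangential derivatives of the biharmonic Green's function, so mapping properties and compactness on the appropriate product Sobolev space on $\p M$ must be checked with care before Fredholm's alternative may be invoked.
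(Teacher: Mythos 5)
Your proposal re-proves Theorem \ref{thm_main} from scratch rather than deducing the corollary from it. In the paper, Corollary \ref{cor_main} is an immediate consequence of Theorem \ref{thm_main} once one observes that the transversal manifold of a $3$--dimensional admissible manifold is a \emph{simple two-dimensional} manifold, on which the geodesic ray transform (indeed the attenuated one) is constructively invertible by \cite{Pestov_Uhlmann_2004}, \cite{Krishnan_2010}, \cite{Salo_Uhlmann_2011}. That observation is the only content specific to the corollary, and you leave it as ``the assumed constructive inversion of the geodesic ray transform on $M_0$'' --- for the corollary nothing may be assumed here. Note also that the limit $h\to 0$ does not produce the plain ray transform of the partial Fourier transform of $q$: with $s=\frac{1}{h}+i\lambda$ an attenuation factor $e^{-2\lambda t}$ appears (see \eqref{eq_4_21}), so what must be inverted is the \emph{attenuated} transform with constant attenuation $-2\lambda$, and it is precisely the constructive inversion of \cite{Salo_Uhlmann_2011} on simple surfaces that closes the argument.

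The serious gap is in your boundary integral equation. You take Nachman's original route (a global Green kernel for $\Delta_{\tilde g}^2$ on an enlarged manifold) and assert that unique solvability of the resulting equation ``is guaranteed by the uniqueness result of \cite{Yan_2020} applied to the difference of two putative solutions.'' That inference is not valid: uniqueness for the inverse problem ($\Lambda_{q_1}=\Lambda_{q_2}\Rightarrow q_1=q_2$) says nothing about injectivity of a boundary integral operator acting on candidate Cauchy data for a \emph{fixed} $q$. In $\R^n$ the injectivity in \cite{Nachman_1988} rests on a uniqueness characterization of the CGO solution itself (decay at infinity built into the Faddeev kernel), and no analogue is available on a compact CTA manifold --- which is exactly why the paper follows the second, Nachman--Street route \cite{Nachman_Street_2010}, \cite{Feizm_K_Oksan_Uhl}: the solution $u_1$ is pinned down uniquely through the Green operator $G_\varphi^2$ built from Carleman estimates (Section \ref{sec_Nachman-Street}), it satisfies \eqref{eq_4_15}, and the equivalent boundary equation $(1+h^4S_\varphi(\Lambda_q-\Lambda_0))\gamma u_1=\gamma u_0$ is shown to be uniquely solvable in Proposition \ref{prop_2_5} by an operator-norm argument, not by appeal to the uniqueness theorem. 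Without such a characterization your equation may have a nontrivial kernel, or its solution need not coincide with the trace of the CGO you constructed. Two further omissions: the identity actually used is \eqref{eq_2_26} with $\Lambda_q-\Lambda_0$, which requires only the two-component Dirichlet trace $\gamma u_1$ (not four traces); and one must first reconstruct $q|_{\partial M}$ (Theorem \ref{thm_boundary}) so as to extend $q$ continuously by a known function off $M$ before the Fourier transform in $x_1$ and the limit \eqref{eq_4_3} can legitimately be taken.
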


\begin{rem}
As explained in \cite{DKSaloU_2009},  bounded smooth domains in the Euclidean space are examples of admissible manifolds, and therefore, Corollary \ref{cor_main} is applicable and new in this case. 

\end{rem}

\begin{rem} Beyond the case of a simple two-dimensional Riemannian manifold,  the constructive invertibility of the geodesic ray transform is also known in particular in the following situations:
\begin{itemize}
\item  $(M_0,g_0)$ is a two-dimensional Riemannian manifold with strictly convex boundary, no conjugate points, and the hyperbolic trapped set (these conditions are satisfied in negative curvature, in particular), see  \cite{Guillarmou_Monard_2017}.

\item $(M_0, g_0)$ is of dimension $n\ge 3$, has a strictly convex boundary and is globally foliated by strictly convex hypersurfaces, see \cite{Uhlmann_Vasy_2016}. 

\end{itemize}

\end{rem}

\begin{rem}
The work \cite{Yan_2020} establishes that not only a continuous potential but an entire continuous first order perturbation can be determined uniquely from the knowledge of the set of the Cauchy data on the boundary of a CTA manifold provided that the geodesic ray transform on the transversal manifold is injective, and therefore, it would be interesting to propose a reconstruction procedure of the recovery of a full first order perturbation. We shall address this question in a future work. To the best of our knowledge, there are no reconstruction results even in the case of a first order perturbation of the Laplacian on admissible manifolds and the only available result is the work \cite{Campos_2019} in the case of compact domains contained in cylindrical manifolds of the form $\R\times\mathbb{T}^d$ with $\mathbb{T}^d$ being the $d$-dimensional torus, $d\ge 2$, see also \cite{Salo_2006} for the Euclidean case. Note that the problem of determining a first order perturbation of the biharmonic operator 
appears to be more challenging, as here one has to recover a first order perturbation uniquely while in the case of the Laplacian, one only needs to determine it up to a gauge transformation, which is only the first step in the corresponding program for the biharmonic operator, see \cite{Yan_2020}. 
\end{rem}

Let us proceed to discuss the main ideas in the proof of Theorem \ref{thm_main}.  The first step is the derivation of the integral identity,
\begin{equation}
\label{eq_int_integral}
\int_M q u_1\overline{u_2}dV=\langle (\Lambda_q-\Lambda_0)\gamma u_1, \gamma \overline{u_2} \rangle_{H^{1/2}(\p M)\times H^{3/2}(\p M), H^{-1/2}(\p M)\times H^{-3/2}(\p M)},
\end{equation}
where $u_1, u_2\in L^2(M)$ are solutions to $(\Delta^2+q)u_1=0$ and $\Delta^2u_2=0$ in $M^{\text{int}}$. The next step is to test the integral identity \eqref{eq_int_integral} agains suitable complex geometric optics solutions $u_1$ and $u_2$. Working on a general CTA manifold, we shall obtain such solutions based on Gaussian beam quasimodes for the conjugated biharmonic operator, constructed on $M$ and localized to non-tangential geodesics on the transversal manifold $M_0$ times $\R_{x_1}$. Such solutions were constructed in \cite{Yan_2020} without any notion of uniqueness involved. In this paper, we propose an alternative construction to produce complex geometric optics solutions enjoying a uniqueness property. The key step in the proof is the constructive determination of the Dirichlet trace $\gamma u_1$ on $\p M$ of the unique complex geometric optics solution $u_1$ from the knowledge of the Dirichlet--to--Neumann map $\Lambda_q$.  Once this step is carried out, the quantity on the right hand side of \eqref{eq_int_integral} is reconstructed thanks to the knowledge of the manifold $M$ and $\Lambda_q$. Another ingredient in the proof is the boundary reconstruction formula for $q|_{\p M}$ from the knowledge of $\Lambda_q$. Using it together with the constructive invertibility of the geodesic ray transform and following the standard argument, see \cite{DKurylevLS_2016}, \cite{Feizm_K_Oksan_Uhl}, we reconstruct the potential $q$ from the left hand side of \eqref{eq_int_integral}, with $u_1$ and $u_2$ being the complex geometric optics solutions.

To the best of our knowledge there are two approaches to the reconstruction of the Dirichlet boundary traces of suitable complex geometric optics solutions to the Schr\"odinger equation in the Euclidean space in the literature. In the first one,  suitable complex geometric optics solutions are constructed globally on all of $\R^n$, enjoying uniqueness properties characterized by decay at infinity, see \cite{Nachman_1988}, \cite{Novikov_1988}, while in the second one, complex geometric optics solutions are constructed by means of Carleman estimates on a bounded domain, and the notion of uniqueness is obtained by restricting the attention to solutions of minimal norm, see \cite{Nachman_Street_2010}. In both approaches, the boundary traces of the complex geometric optics solutions in question are determined as unique solutions of well posed integral equations on the boundary of the domain, involving the Dirichlet--to--Neumann map along with other known quantities.  In the proof of Theorem \ref{thm_main} in order to reconstruct the Dirichlet trace $\gamma u_1$ on $\p M$ of the unique complex geometric optics solution $u_1$ from the knowledge of the Dirichlet--to--Neumann map $\Lambda_q$, we follow the second approach, adapting the simplified version of it given in  \cite{Feizm_K_Oksan_Uhl} to the case of perturbed biharmonic operators.  

Finally, let us mention that similarly to the reconstructions results of \cite{Kenig_Salo_Uhlmann_2011} and  \cite{Feizm_K_Oksan_Uhl}, we make no claims regarding practicality of the reconstruction procedure developed in this paper.  Our purpose merely is to show that all the steps in the proof of the uniqueness result of  \cite{Yan_2020} can be carried out constructively.

This article is organized as follows. In Section \ref{section_2} we collect some essentially well known results related to the maximal domain of the biharmonic operator and boundary traces needed in the proof of Theorem \ref{thm_main}. The derivation of the integral identify \eqref{eq_int_integral} is also given in Section \ref{section_2}. In Section \ref{sec_Nachman-Street}  we present an extension of the Nachman--Street method \cite{Nachman_Street_2010} for the constructive determination of the boundary traces of suitable complex geometric optics solutions, developed for the Schr\"odinger equation, to the case of the perturbed biharmonic equation.  In Section \ref{section_4}, we give a construction of complex geometric optics solutions to the perturbed biharmonic equations enjoying uniqueness property and complete the proof of Theorem \ref{thm_main}. Finally, a reconstruction formula for the boundary traces of a continuous potential from the knowledge of $\Lambda_q$ for the perturbed biharmonic operator is established in Appendix \ref{appendix}.

\section{The Hilbert space $H_{\Delta^2}(M)$ and boundary traces}
\label{section_2}

The purpose of this section is to collect some essentially well known results needed in the proof of Theorem \ref{thm_main}, see also \cite{Grubb_book}, \cite{Lions_Magenes_1972}. Since we are dealing with the biharmonic operator $
\Delta^2$ rather than the Laplacian, some of the proofs are provided for the convenience of the reader. 

Let $(M,g)$ be a smooth compact oriented Riemannian manifold of dimension $n\ge 3$ with smooth boundary $\p M$. 
We shall need the following Green formula for $\Delta^2$, valid for $u,v\in H^4(M^{\text{int}})$, 
\begin{equation}
\label{eq_Green_1}
\begin{aligned}
\int_M (\Delta^2 u) vdV-\int_M u(\Delta^2 v)dV=&\int_{\p M} \p_\nu u (\Delta v)dS -\int_{\p M} u\p_\nu(\Delta v)dS\\
&+\int_{\p M} \p_\nu(\Delta u) vdS-\int_{\p \Omega} (\Delta u)\p_\nu vdS,
\end{aligned}
\end{equation}
where $\nu$ is the unit exterior normal vector to $\p M$, $dV$ and $dS$ are the Riemannian volume elements on $M$ and $\p M$, respectively, see \cite{Grubb_book}.

We shall also need the following expressions for the operators $\Delta$ and $\p_\nu \Delta$ on the boundary of $M$, valid for $v\in H^4(M^{\text{int}})$,
\begin{equation}
\label{eq_2_1}
\begin{aligned}
&\Delta v= \p_\nu^2 v+H\p_\nu v+\Delta_t v\quad \text{on}\quad \p M,\\
&\p_\nu \Delta v= \p_\nu^3 v+\p_\nu H\p_\nu v+H\p_\nu^2 v+\Delta_t \p_\nu v\quad \text{on}\quad \p M,\
\end{aligned}
\end{equation}
where $H\in C^\infty(M)$ and $\Delta_t=\Delta_{g|_{\p M}}$ is the tangential Laplacian on $\p M$, see 
\cite{Lee_Uhlmann_1989}.

Consider the Hilbert space 
\[
H_{\Delta^2}(M)=\{u\in L^2(M): \Delta^2 u\in L^2(M)\},
\]
equipped with the norm
\[
\|u\|^2_{H_{\Delta^2}(M)}=\|u\|^2_{L^2(M)}+\|\Delta^2 u\|_{L^2(M)}^2.
\]
The space $H_{\Delta^2}(M)$ is the maximal domain of the bi-Laplacian $\Delta^2$, acting on $L^2(M)$.

We shall  need the following result concerning  the existence of traces of functions in $H_{\Delta^2}(M)$. 
\begin{lem}
\label{lem_trace_map}
\begin{itemize} 
\item[(i)] The trace map $\gamma_j: C^\infty(M)\to C^\infty(\p M)$, $u\mapsto \p_\nu^j u|_{\p M}$, $j=0,1$,  extends to a linear continuous map
\begin{equation}
\label{lem_trace_map_1}
\gamma_j: H_{\Delta^2}(M)\to H^{-j-1/2}(\p M). 
\end{equation}

\item[(ii)] The trace map $\tilde \gamma_j: C^\infty(M)\to C^\infty(\p M)$, $u\mapsto \p_\nu^j (\Delta u)|_{\p M}$, $j=0,1$,  extends to a linear continuous map
\[
\tilde \gamma_j: H_{\Delta^2}(M)\to H^{-j-5/2}(\p M). 
\]
\end{itemize}

\end{lem}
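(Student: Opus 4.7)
The plan is to prove both parts by the standard duality technique: apply Green's identity \eqref{eq_Green_1} to $u \in C^\infty(M)$ against a carefully chosen test function $v \in H^4(M^{\text{int}})$, solve for the desired boundary term, and use the resulting integral representation to define the trace of a general $u \in H_{\Delta^2}(M)$ as an element of the appropriate negative Sobolev space. The estimate follows immediately since the volume integrals are bounded by $\|u\|_{H_{\Delta^2}(M)}\|v\|_{H^4(M^{\text{int}})}$, so everything reduces to constructing $v$ with prescribed boundary data and norm controlled by the data.

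The key input is the surjectivity of the full trace map $u \mapsto (u|_{\p M},\p_\nu u|_{\p M},\p_\nu^2 u|_{\p M},\p_\nu^3 u|_{\p M})$ from $H^4(M^{\text{int}})$ onto $H^{7/2}(\p M)\times H^{5/2}(\p M)\times H^{3/2}(\p M)\times H^{1/2}(\p M)$, which is a case of \cite[Theorem 9.5]{Grubb_book} already cited in the introduction. Using the triangular relations \eqref{eq_2_1} between $(v,\p_\nu v,\p_\nu^2 v,\p_\nu^3 v)$ and the ``bi-Cauchy data'' $(v,\p_\nu v,\Delta v,\p_\nu\Delta v)$, this surjectivity translates into the ability to prescribe $(v|_{\p M},\p_\nu v|_{\p M},\Delta v|_{\p M},\p_\nu(\Delta v)|_{\p M})\in H^{7/2}\times H^{5/2}\times H^{3/2}\times H^{1/2}(\p M)$ freely, with norm control. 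For part (i) with $j=0$, given $\phi\in H^{1/2}(\p M)$, I would pick $v$ with bi-Cauchy data $(0,0,0,-\phi)$, so that Green's identity collapses to $\int_M u\,\Delta^2 v\,dV-\int_M (\Delta^2 u)\,v\,dV=\int_{\p M}u\phi\,dS$, defining $\gamma_0 u\in H^{-1/2}(\p M)$ by the left-hand side; for $j=1$, the bi-Cauchy data $(0,0,\phi,0)$ with $\phi\in H^{3/2}(\p M)$ isolates $\int_{\p M}\p_\nu u\,\phi\,dS$. For part (ii), the same scheme with data $(0,-\phi,0,0)$ and $\phi\in H^{5/2}(\p M)$ isolates $\int_{\p M}\Delta u\,\phi\,dS$, and with $(\phi,0,0,0)$ and $\phi\in H^{7/2}(\p M)$ isolates $\int_{\p M}\p_\nu(\Delta u)\,\phi\,dS$.

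The only genuinely careful step is verifying that the distribution defined by the right-hand side is independent of the choice of extension $v$: two admissible choices differ by some $w\in H^4(M^{\text{int}})$ whose full bi-Cauchy data vanish, and by inverting the triangular system \eqref{eq_2_1} this is equivalent to $\p_\nu^k w|_{\p M}=0$ for $k=0,1,2,3$, i.e.\ $w$ lies in the closure of $C_c^\infty(M^{\text{int}})$ in $H^4(M^{\text{int}})$. Approximating $w$ by test functions and passing to the limit, Green's identity \eqref{eq_Green_1} yields $\int_M u\,\Delta^2 w\,dV=\int_M(\Delta^2 u)\,w\,dV$ for every $u\in H_{\Delta^2}(M)$, which proves independence and simultaneously shows that the duality definition agrees with the classical pointwise trace when $u\in C^\infty(M)$. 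I expect this well-definedness argument, together with the bookkeeping that translates the prescribed bi-Cauchy data back to ordinary normal derivative data through \eqref{eq_2_1} while keeping track of Sobolev regularity, to be the only mildly technical point; the rest is a direct application of Green's identity and the surjectivity of the higher-order trace map.
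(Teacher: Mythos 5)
Your proposal is correct and follows essentially the same route as the paper: Green's formula \eqref{eq_Green_1} tested against $v\in H^4(M^{\text{int}})$ whose bi-Cauchy data $(v,\p_\nu v,\Delta v,\p_\nu\Delta v)$ isolate one boundary term at a time, with the translation back to ordinary normal-derivative data via \eqref{eq_2_1} matching the paper's explicit choices. The only cosmetic difference is that the paper first proves the estimate for $u\in C^\infty(M)$ and then extends by density of $C^\infty(M)$ in $H_{\Delta^2}(M)$, whereas you define the trace directly by duality and verify independence of the extension; both are standard and equivalent here.
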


\begin{proof}
We follow the arguments of \cite[Section 1]{Bukhgeim_Uhlmann_2002}, carried out in the case of $\Delta$.

(i). Let $j=0$, $u\in C^\infty(M)$, and $w\in H^{1/2}(\p M)$. By the Sobolev extension theorem, see \cite[Theorem 9.5]{Grubb_book},  there exists $v\in H^4(M^{\text{int}})$ such that 
\begin{equation}
\label{eq_2_2}
v|_{\p M}=0, \quad \p_\nu v|_{\p M}=0, \quad \p_\nu^2 v|_{\p M}=0,\quad   \p_\nu^3v|_{\p M}=w, 
\end{equation}
and 
\begin{equation}
\label{eq_2_3}
\|v\|_{H^4(M^{\text{int}})}\le C\|w\|_{H^{1/2}(\p M)}. 
\end{equation}
It follows from \eqref{eq_Green_1},  \eqref{eq_2_1}, \eqref{eq_2_2} that 
\[
 -\int_{\p M} u w dS= \int_M (\Delta^2 u) vdV-\int_M u(\Delta^2 v)dV,
\]
and therefore, using \eqref{eq_2_3}, we get 
\[
\bigg| \int_{\p M} u w dS\bigg|\le C\|u\|_{H_{\Delta^2}(M)}\|v\|_{H^4(M^{\text{int}})}\le C \|u\|_{H_{\Delta^2}(M)} \|w\|_{H^{1/2}(\p M)}.
\]
Hence, 
\begin{equation}
\label{eq_2_4}
\|\gamma_0 u\|_{H^{-1/2}(\p M)}\le C\|u\|_{H_{\Delta^2}(M)}. 
\end{equation}
By the density of  the space $C^\infty(M)$ in $H_{\Delta^2}(M)$, see \cite[Chapter 2, Section 8.1, page 192]{Lions_Magenes_1972},  and also  \cite[Theorem 9.8, and page 233]{Grubb_book}, we conclude that the map $\gamma_0$ extends to a continuous linear map: $H_{\Delta^2}(M)\to H^{-1/2}(\p M)$ and \eqref{eq_2_4} holds for all $u\in H_{\Delta^2}(M)$.  This shows (i) with $j=0$. 

Let next $j=1$ in (i) and let us now prove that $\gamma_1$ extends to a continuous linear map:  $H_{\Delta^2}(M)\to H^{-3/2}(\p M)$. To that end, let $u\in C^\infty(M)$ and let $w\in H^{3/2}(\p M)$. By the Sobolev extension theorem, there is $v\in H^4(M^{\text{int}})$ such that 
\begin{equation}
\label{eq_2_5}
v|_{\p M}=0, \quad \p_\nu v|_{\p M}=0, \quad \p_\nu^2 v|_{\p M}=w,\quad   \p_\nu^3v|_{\p M}=-Hw, 
\end{equation}
where $H$ is defined in \eqref{eq_2_1}, 
and
\begin{equation}
\label{eq_2_6}
\|v\|_{H^4(M^{\text{int}})}\le C\|w\|_{H^{3/2}(\p M)}. 
\end{equation}
It follows from \eqref{eq_2_1} and \eqref{eq_2_5} that 
\begin{equation}
\label{eq_2_7}
\Delta v|_{\p M}=w, \quad \p_\nu(\Delta v)|_{\p M}=0. 
\end{equation}
Using \eqref{eq_Green_1}, \eqref{eq_2_5}, \eqref{eq_2_7}, we get 
\[
\int_{\p M} (\p_\nu u) wdS =\int_M (\Delta^2 u) vdV-\int_M u(\Delta^2 v)dV,
\]
and therefore, using \eqref{eq_2_6}, we see that
\[
\bigg| \int_{\p M} (\p_\nu u) w dS\bigg|\le  C \|u\|_{H_{\Delta^2}(M)} \|w\|_{H^{3/2}(\p M)}.
\]
Thus, 
\begin{equation}
\label{eq_2_8}
\|\gamma_1 u\|_{H^{-3/2}(\p M)}\le C\|u\|_{H_{\Delta^2}(M)}. 
\end{equation}
By the density of  the space $C^\infty(M)$ in $H_{\Delta^2}(M)$, we obtain that the map $\gamma_1$ extends to a continuous linear map: $H_{\Delta^2}(M)\to H^{-3/2}(\p M)$ and \eqref{eq_2_8} holds for all $u\in H_{\Delta^2}(M)$.  This shows (i) with $j=1$.

(ii). The proof here follows along the same lines as in the case (i). Let us only mention that when $j=0$, we shall work with $w\in H^{5/2}(\p M)$ and $v\in H^4(M^{\text{int}})$ such that 
\[
v|_{\p M}=0, \quad \p_\nu v|_{\p M}= w,\quad \p_\nu^2 v=-Hw,\quad \p_\nu^3 v=-(\p_\nu H)w+H^2w-\Delta_t w.
\]
Therefore, this together with \eqref{eq_2_1} implies that 
\[
\Delta v|_{\p M}=0,\quad \p_\nu\Delta v|_{\p M}=0.
\]
We also have $\|v\|_{H^4(M^{\text{int}})}\le C\|w\|_{H^{5/2}(\p M)}$.

When  $j=1$, we shall work with $w\in H^{7/2}(\p M)$ and $v\in H^4(M^{\text{int}})$ such that 
\[
v|_{\p M}=w, \quad \p_\nu v|_{\p M}= 0,\quad \p_\nu^2 v=-\Delta_t w,\quad \p_\nu^3 v=H\Delta_t w.
\]
Therefore, by \eqref{eq_2_1}, we get  
\[
\Delta v|_{\p M}=0,\quad \p_\nu\Delta v|_{\p M}=0.
\]
We also have $\|v\|_{H^4(M^{\text{int}})}\le C\|w\|_{H^{7/2}(\p M)}$. This completes the proof of Lemma \ref{lem_trace_map}. 
\end{proof}

By Lemma \ref{lem_trace_map},  we have the following consequence of  \eqref{eq_Green_1}. 
\begin{cor}
For any $u\in H_{\Delta^2}(M)$ and $v\in H^4(M^{\text{int}})$, we have the following generalized Green formula, \begin{equation}
\label{eq_Green}
\begin{aligned}
\int_M (\Delta^2 u) vdV-\int_M u\Delta^2 vdV=&\int_{\p M} \p_\nu u (\Delta v)dS -\int_{\p M} u\p_\nu(\Delta v)dS\\
&+\int_{\p M} \p_\nu(\Delta u) vdS-\int_{\p \Omega} (\Delta u)\p_\nu vdS,
\end{aligned}
\end{equation}
where 
\begin{align*}
&\int_{\p M} \p_\nu u (\Delta v)dS:=\langle \gamma_1 u, \Delta v \rangle_{H^{-3/2}(\p M), H^{3/2}(\p M)},  \\ 
&\int_{\p M} u\p_\nu(\Delta v)dS:=\langle \gamma_0 u, \p_\nu(\Delta v) \rangle_{H^{-1/2}(\p M), H^{1/2}(\p M)},\\
&\int_{\p M} \p_\nu(\Delta u) vdS:= \langle \tilde  \gamma_1 u, v \rangle_{H^{-7/2}(\p M), H^{7/2}(\p M)}, \\
&\int_{\p \Omega} (\Delta u)\p_\nu vdS:= \langle \tilde  \gamma_0 u, \p_\nu v \rangle_{H^{-5/2}(\p M), H^{5/2}(\p M)}.
\end{align*}
\end{cor}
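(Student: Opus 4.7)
The plan is to prove the generalized Green formula by a standard density plus continuity argument, leveraging Lemma \ref{lem_trace_map} and the already established classical identity \eqref{eq_Green_1}.

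First, I would fix $v\in H^4(M^{\text{int}})$ and observe that by the standard trace theorem on $M$, the restrictions that appear on the right-hand side of \eqref{eq_Green} have the regularity
$v|_{\p M}\in H^{7/2}(\p M)$, $\p_\nu v|_{\p M}\in H^{5/2}(\p M)$, $\Delta v|_{\p M}\in H^{3/2}(\p M)$ and $\p_\nu(\Delta v)|_{\p M}\in H^{1/2}(\p M)$. In particular, these four boundary functions serve as valid test elements in the dual pairings written in the statement. I would also note that the two bulk integrals on the left-hand side are continuous bilinear forms in $(u,v)\in H_{\Delta^2}(M)\times H^4(M^{\text{int}})$, by the very definition of the $H_{\Delta^2}(M)$--norm.

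Next, I would take a sequence $u_k\in C^\infty(M)$ converging to $u$ in $H_{\Delta^2}(M)$; such a sequence exists by the density result cited in the proof of Lemma \ref{lem_trace_map} (see \cite[Chapter 2]{Lions_Magenes_1972} or \cite[Theorem 9.8]{Grubb_book}). For each $u_k$, the classical Green identity \eqref{eq_Green_1} applies and, since $u_k$ is smooth up to the boundary, each of the four boundary integrals on the right-hand side of \eqref{eq_Green_1} can be rewritten as the corresponding duality pairing in the statement without any change. Passing to the limit $k\to\infty$, the left-hand side converges to $\int_M(\Delta^2 u)v\,dV-\int_M u\,\Delta^2 v\,dV$ by $L^2$--convergence of $u_k$ and $\Delta^2 u_k$. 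By Lemma \ref{lem_trace_map}, the traces $\gamma_0 u_k,\gamma_1 u_k$ converge to $\gamma_0 u,\gamma_1 u$ in $H^{-1/2}(\p M)$ and $H^{-3/2}(\p M)$, respectively, and $\tilde\gamma_0 u_k,\tilde\gamma_1 u_k$ converge to $\tilde\gamma_0 u,\tilde\gamma_1 u$ in $H^{-5/2}(\p M)$ and $H^{-7/2}(\p M)$. Combined with the $H^s$--regularity of the boundary test functions listed above, each of the four duality pairings on the right-hand side of \eqref{eq_Green} passes to the limit. This yields the identity for general $u\in H_{\Delta^2}(M)$ and $v\in H^4(M^{\text{int}})$.

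The only step requiring any care is making sure that, for smooth $u$, the boundary integrals written in classical notation in \eqref{eq_Green_1} really do coincide with the duality pairings defining the right-hand side of \eqref{eq_Green}; this is immediate because in that case the traces are smooth functions on $\p M$ and the pairings reduce to the usual $L^2(\p M)$ integrals. I do not foresee any genuine obstacle: this is essentially the weak formulation argument that extends the integration by parts identity to the maximal domain of the operator, analogous to the classical derivation for the Laplacian in \cite{Lions_Magenes_1972}.
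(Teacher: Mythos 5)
Your proposal is correct and is essentially the argument the paper intends: the corollary is stated as an immediate consequence of Lemma \ref{lem_trace_map} and the classical identity \eqref{eq_Green_1}, and the implicit justification is exactly your density-plus-continuity argument (approximate $u$ by $C^\infty(M)$ functions, apply \eqref{eq_Green_1}, and pass to the limit using the continuity of $\gamma_j$, $\tilde\gamma_j$ and the $H^{7/2}\times H^{5/2}\times H^{3/2}\times H^{1/2}$ regularity of the boundary data of $v$). No gaps.
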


We shall need the following extension of \cite[Theorem 26.3]{Eskin_book} to the case of the biharmonic operator $\Delta^2$. Here for $u\in H_{\Delta^2}(M)$, we set
\begin{equation}
\label{eq_2_9_-1}
\gamma u= (\gamma_0 u,\gamma_1 u),
\end{equation} 
where $\gamma_j$, $j=0,1$, are given by 
\eqref{lem_trace_map_1}. Note $\gamma$ in \eqref{eq_2_9_-1} is an extension of the trace map in   \eqref{int_trace}. 
\begin{thm}
\label{thm_eskin}
For each $g=(g_0, g_1)\in H^{-1/2}(\p M)\times H^{-3/2}(\p M)$, there exists a unique $u\in L^2(M)$ such that 
\begin{equation}
\label{eq_2_9}
\begin{cases}
\Delta^2 u=0& \text{in}\quad M^{\text{int}},\\
\gamma u=g& \text{on}\quad \p M,
\end{cases}
\end{equation}
and 
\begin{equation}
\label{eq_2_10}
\|u\|_{L^2(M)}\le C\|g\|_{H^{-1/2}(\p M)\times H^{-3/2}(\p M)}. 
\end{equation}
Here $\|g\|_{H^{-1/2}(\p M)\times H^{-3/2}(\p M)}^2=\|g_0\|_{H^{-1/2}(\p M)}^2+ \|g_1\|_{H^{-3/2}(\p M)}^2$. 
\end{thm}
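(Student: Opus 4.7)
The plan is to adapt Eskin's duality argument for the Laplacian to the biharmonic case. The central auxiliary operator will be the solution operator $S:L^2(M)\to H^4(M^{\text{int}})\cap H^2_0(M^{\text{int}})$ for the clamped Dirichlet problem $\Delta^2 v=f$, $\gamma v=0$; its boundedness follows from the coercivity of $\Delta^2$ on $H^2_0$ together with elliptic regularity up to the boundary, see \cite[Theorem 11.14]{Grubb_book}. The operator $S$ will carry both the uniqueness and existence arguments.

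For \emph{uniqueness}, suppose $u\in L^2(M)$ satisfies $\Delta^2 u=0$ in $M^{\text{int}}$ and $\gamma u=0$. Given $f\in L^2(M)$, set $v=Sf\in H^4\cap H^2_0$. In the generalized Green formula \eqref{eq_Green} all four boundary terms vanish, since $\gamma u=0$ kills the first two and $v\in H^2_0$ kills the last two. Hence $\int_M uf\,dV = \int_M(\Delta^2 u)v\,dV = 0$ for every $f\in L^2(M)$, forcing $u=0$.

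For \emph{existence}, I would define
\[
B:L^2(M)\to H^{1/2}(\p M)\times H^{3/2}(\p M),\qquad Bf=\bigl(\p_\nu(\Delta Sf)|_{\p M},\,-\Delta Sf|_{\p M}\bigr),
\]
which is bounded since $Sf\in H^4$ yields $\Delta Sf\in H^2$ and hence traces in $H^{1/2}$ and $H^{3/2}$. The adjoint
\[
B^*:H^{-1/2}(\p M)\times H^{-3/2}(\p M)\to L^2(M)
\]
is then bounded, and I claim $u:=B^*g$ is the desired solution, immediately providing the estimate \eqref{eq_2_10}. First, for $\varphi\in C_c^\infty(M^{\text{int}})$, the uniqueness of $S$ gives $S(\Delta^2\varphi)=\varphi$, so $B(\Delta^2\varphi)=(0,0)$ as $\varphi$ is supported away from $\p M$; thus $\langle u,\Delta^2\varphi\rangle = \langle g,B(\Delta^2\varphi)\rangle = 0$, i.e.\ $\Delta^2 u=0$ in the distributional sense. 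Consequently $u\in H_{\Delta^2}(M)$, so $\gamma u$ is defined by Lemma~\ref{lem_trace_map}. Applying \eqref{eq_Green} with $v=Sf$ for arbitrary $f\in L^2$, the hypotheses $\Delta^2 u=0$ and $\gamma v=0$ reduce it to
\[
\int_M u f\,dV = -\langle \gamma_1 u,\Delta v\rangle + \langle \gamma_0 u,\p_\nu\Delta v\rangle,
\]
while by the definition of $B^*$ the same integral equals $\langle g_0,\p_\nu\Delta v\rangle-\langle g_1,\Delta v\rangle$. Subtracting yields
\[
\langle \gamma_0 u-g_0,\,\p_\nu\Delta v\rangle - \langle \gamma_1 u-g_1,\,\Delta v\rangle = 0 \qquad\text{for every }v\in H^4\cap H^2_0.
\]

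The final step is to decouple these two pairings by showing that $v\mapsto(\Delta v|_{\p M},\p_\nu\Delta v|_{\p M})$ is surjective from $H^4\cap H^2_0$ onto $H^{3/2}(\p M)\times H^{1/2}(\p M)$. From \eqref{eq_2_1} with $v|_{\p M}=\p_\nu v|_{\p M}=0$ one has $\Delta v|_{\p M}=\p_\nu^2 v|_{\p M}$ and $\p_\nu\Delta v|_{\p M}=\p_\nu^3 v|_{\p M}+H\p_\nu^2 v|_{\p M}$, so given any $(a,b)\in H^{3/2}\times H^{1/2}$ the Sobolev extension theorem \cite[Theorem 9.5]{Grubb_book} produces $v\in H^4\cap H^2_0$ with $\p_\nu^2 v|_{\p M}=a$ and $\p_\nu^3 v|_{\p M}=b-Ha$, realizing any prescribed pair $(\Delta v,\p_\nu\Delta v)|_{\p M}=(a,b)$. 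This splits the displayed identity into $\gamma_0 u=g_0$ and $\gamma_1 u=g_1$. The only point I expect to require care is this surjectivity together with the bookkeeping of the mean-curvature shift $H$ in \eqref{eq_2_1}, which must be tracked precisely so that the dual pairings align with the splitting of $g\in H^{-1/2}\times H^{-3/2}$.
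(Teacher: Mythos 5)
Your proof is correct and follows essentially the same transposition/duality route as the paper: your $u=B^*g$ is exactly the Riesz representative of the functional $f\mapsto\langle g_0,\p_\nu(\Delta Sf)\rangle-\langle g_1,\Delta Sf\rangle$ that the paper constructs via Hahn--Banach on the subspace $L=\{\Delta^2v\}$, and your decoupling of the two trace components through the boundary normal coordinate expansion \eqref{eq_2_1} (with the mean-curvature correction $\p_\nu^3v=b-Ha$) is precisely the paper's choice of test functions \eqref{eq_2_2} and \eqref{eq_2_5}. The only genuine difference is uniqueness, where the paper invokes boundary elliptic regularity to upgrade the $L^2$ solution with vanishing trace to $H^4$, while your Green-formula pairing against $v=Sf$ is self-contained and avoids appealing to regularity of very weak solutions.
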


\begin{proof}
We shall follow the proof of  \cite[Theorem 26.3]{Eskin_book}.  Let $v\in H^4(M^{\text{int}})$ be such that $v|_{\p M}=0$, $\p_\nu v|_{\p M}=0$. If there is $u\in L^2(M)$ satisfying \eqref{eq_2_9} then by the generalized Green formula \eqref{eq_Green}, we obtain 
\begin{equation}
\label{eq_2_11}
\int_M u\Delta^2 vdV=\langle g_0, \p_\nu(\Delta v) \rangle_{H^{-1/2}(\p M), H^{1/2}(\p M)}-\langle g_1, \Delta v \rangle_{H^{-3/2}(\p M), H^{3/2}(\p M)}.
\end{equation}
Consider the subspace
\[
L:=\{\Delta^2 v: v\in H^4(M^{\text{int}}), v|_{\p M}=0, \ \p_\nu v|_{\p M}=0 \}\subset L^2(M).
\]
In view of \eqref{eq_2_11}, we define the linear functional $F$ on $L$ by
\begin{equation}
\label{eq_2_12}
F(\Delta^2 v):=\langle g_0, \p_\nu(\Delta v) \rangle_{H^{-1/2}(\p M), H^{1/2}(\p M)}-\langle g_1, \Delta v \rangle_{H^{-3/2}(\p M), H^{3/2}(\p M)}.
\end{equation}
Using the Cauchy--Schwarz inequality, the following Sobolev trace theorem
\[
\|(v,\p_\nu v, \p_\nu^2 v, \p_\nu^3 v)\|_{(H^{7/2}\times H^{5/2}\times H^{3/2}\times H^{1/2})(\p M)}\le C\|v\|_{H^4(M^{\text{int}})}, 
\]
and \eqref{eq_2_1}, we obtain from \eqref{eq_2_12} that 
\begin{equation}
\label{eq_2_13}
\begin{aligned}
|F(\Delta^2 v)|&\le \|g_0\|_{H^{-1/2}(\p M)}\|\p_\nu(\Delta v)\|_{H^{1/2}(\p M)}+\|g_1\|_{H^{-3/2}(\p M)}\|\Delta v\|_{H^{3/2}(\p M)}\\
&\le C\|g\|_{H^{-1/2}(\p M)\times H^{-3/2}(\p M)}\|v\|_{H^4(M^{\text{int}})}.
\end{aligned}
\end{equation}
Using the fact that $v|_{\p M}=0$,  $\p_\nu v|_{\p M}=0$, and boundary elliptic regularity, see \cite[Theorem 11.14]{Grubb_book}, we get 
\begin{equation}
\label{eq_2_14}
\|v\|_{H^4(M^{\text{int}})}\le C\|\Delta^2 v\|_{L^2(M)}. 
\end{equation}

Combining \eqref{eq_2_13} and \eqref{eq_2_14}, we obtain that 
\[
|F(\Delta^2 v)|\le C\|g\|_{H^{-1/2}(\p M)\times H^{-3/2}(\p M)}\|\Delta^2 v\|_{L^2(M)},
\]
which shows that $F$ is bounded on $L$. Thus, by the Hahn-Banach theorem, $F$ can be extended to a bounded linear functional on $L^2(M)$, and by Riesz representation theorem, there exists $u\in L^2(M)$ such that 
\begin{equation}
\label{eq_2_15}
F(\Delta^2 v)=\int_M (\Delta^2 v) udV,
\end{equation}
and \eqref{eq_2_10} holds.  Letting $v\in C^\infty_0(M^{\text{int}})$, we conclude from \eqref{eq_2_15} and \eqref{eq_2_12} that $\Delta^2 u=0$ in $M^{\text{int}}$.

Using \eqref{eq_2_15}, \eqref{eq_2_12}, and the generalized Green formula \eqref{eq_Green}, we get 
\begin{equation}
\label{eq_2_16}
\begin{aligned}
&\langle \gamma_0 u, \p_\nu(\Delta v) \rangle_{H^{-1/2}(\p M), H^{1/2}(\p M)}-\langle \gamma_1 u, \Delta v \rangle_{H^{-3/2}(\p M), H^{3/2}(\p M)}\\
&=\langle g_0, \p_\nu(\Delta v) \rangle_{H^{-1/2}(\p M), H^{1/2}(\p M)}-\langle g_1, \Delta v \rangle_{H^{-3/2}(\p M), H^{3/2}(\p M)},
\end{aligned}
\end{equation}
for all $v\in H^4(M^{\text{int}})$  such that $v|_{\p M}=0$, $\p_\nu v|_{\p M}=0$.

Letting $w\in H^{1/2}(\p M)$, and taking $v\in H^4(M^{\text{int}})$ such that \eqref{eq_2_2} holds, we see from  \eqref{eq_2_16} that $\gamma_0 u=g_0$. Furthermore, letting $w\in H^{3/2}(\p M)$ and taking  $v\in H^4(M^{\text{int}})$ such that
\eqref{eq_2_5} holds, in view of  \eqref{eq_2_7}, we conclude from \eqref{eq_2_16} that $\gamma_1 u=g_1$.

The uniqueness follows from the fact that if $u\in L^2(M)$ solves the Dirichlet problem \eqref{eq_2_9} with $g=0$ then by the boundary elliptic regularity, see \cite[Theorem 11.14]{Grubb_book}, $u\in H^4(M^{\text{int}})$, and therefore, $u=0$. 
\end{proof}

\begin{cor}
\label{cor_trace_bijective}
Let $q\in C(M)$ be such that assumption (A) is satisfied, and let 
\[
H_q:=\{u\in L^2(M): (\Delta^2+q)u=0\}\subset H_{\Delta^2}(M). 
\]
Then the trace map
\begin{equation}
\label{eq_2_17}
\gamma: H_q\to H^{-1/2}(\p M)\times H^{-3/2}(\p M)
\end{equation}
is bijective.
\end{cor}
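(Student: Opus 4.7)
The plan is to bootstrap from the $q=0$ case handled in Theorem \ref{thm_eskin} by correcting for the lower order term using assumption (A), and to obtain injectivity from boundary elliptic regularity combined again with (A).

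For injectivity, I would take $u\in H_q$ with $\gamma u=0$. The equation rewrites as $\Delta^2 u=-qu\in L^2(M)$, and combined with the vanishing generalized Dirichlet data this allows boundary elliptic regularity \cite[Theorem 11.14]{Grubb_book} -- applied exactly as at the end of the proof of Theorem \ref{thm_eskin} -- to upgrade $u$ to $H^4(M^{\text{int}})$ with $u|_{\p M}=\p_\nu u|_{\p M}=0$, i.e.\ $u\in H^2_0(M^{\text{int}})$. Assumption (A) then forces $u=0$.

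For surjectivity, given $g\in H^{-1/2}(\p M)\times H^{-3/2}(\p M)$ I would first use Theorem \ref{thm_eskin} to obtain the unique $u_0\in L^2(M)$ with $\Delta^2 u_0=0$ and $\gamma u_0=g$. Since $q\in C(M)$ and $u_0\in L^2(M)$, the source $-qu_0$ lies in $L^2(M)\hookrightarrow H^{-2}(M^{\text{int}})$, and assumption (A) makes the Fredholm map \eqref{int_1} an isomorphism, so there exists a unique $w\in H^2_0(M^{\text{int}})$ with $(\Delta^2+q)w=-qu_0$. The candidate is then $u:=u_0+w\in L^2(M)$, which plainly satisfies $(\Delta^2+q)u=0$ in $M^{\text{int}}$. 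To conclude $\gamma u=g$ it suffices to check $\gamma w=0$ in the sense of Lemma \ref{lem_trace_map}. For this I would note that $\Delta^2 w=-q(u_0+w)\in L^2(M)$, so $w\in H^2_0(M^{\text{int}})\cap H_{\Delta^2}(M)$, and a second application of boundary elliptic regularity yields $w\in H^4(M^{\text{int}})$; on this subspace the extended trace of Lemma \ref{lem_trace_map} agrees with the standard Sobolev trace, which already vanishes by $w\in H^2_0(M^{\text{int}})$.

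The only delicate point I anticipate is this last compatibility between the extended trace defined by duality in Lemma \ref{lem_trace_map} and the standard Sobolev trace on the correction $w$. The $L^2$--regularity of $\Delta^2 w$ is essential and is precisely what allows elliptic regularity to close the loop; everything else reduces to the Fredholm isomorphism \eqref{int_1} coming from (A), Theorem \ref{thm_eskin}, and interior uniqueness under (A).
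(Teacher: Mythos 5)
Your proposal is correct and follows essentially the same route as the paper: surjectivity by solving $\Delta^2 u_0=0$ with $\gamma u_0=g$ via Theorem \ref{thm_eskin} and then correcting with the unique $H^2_0(M^{\text{int}})$ solution of $(\Delta^2+q)w=-qu_0$ furnished by assumption (A) (the paper's $v=-w$), and injectivity by boundary elliptic regularity plus (A). Your extra remark on the compatibility of the extended trace of Lemma \ref{lem_trace_map} with the standard Sobolev trace on the correction term is a sound point of care that the paper leaves implicit.
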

\begin{proof}
We begin by showing that the map $\gamma$ in \eqref{eq_2_17} is surjective. To that end, letting $g\in H^{-1/2}(\p M)\times H^{-3/2}(\p M)$, by Theorem \ref{thm_eskin}, we get  a unique $u\in L^2(M)$ satisfying \eqref{eq_2_9}. Assumption (A) implies that there is a unique $v\in H^2_0(M^{\text{int}})$ such that 
\begin{equation}
\label{eq_2_18}
\begin{cases}
(\Delta^2+q) v=qu& \text{in}\quad M^{\text{int}},\\
\gamma v=0& \text{on}\quad \p M. 
\end{cases}
\end{equation}
Now letting $w=u-v\in L^2(M)$, in view of \eqref{eq_2_9} and \eqref{eq_2_18}, we see that $w\in H_q$ and $\gamma w=g$. This shows the surjectivity of $\gamma$ in  \eqref{eq_2_17}.

The injectivity of  $\gamma$ in \eqref{eq_2_17}  follows from the fact that if $u\in H_q$ is such that $\gamma u=0$ then the boundary elliptic regularity, see \cite[Theorem 11.14]{Grubb_book},  shows that $u\in (H^4\cap H^2_0)(M^{\text{int}})$, and by assumption (A), $u=0$. 
\end{proof}

In view of Corollary \ref{cor_trace_bijective}, we can define the  Poisson operator as follows,
\begin{equation}
\label{eq_2_19}
\mathcal{P}_q=\gamma^{-1}: H^{-1/2}(\p M)\times H^{-3/2}(\p M)\to H_q.
\end{equation}
We have 
\begin{equation}
\label{eq_2_18_1}
\|\mathcal{P}_q f\|_{L^2(M)}\le  C\|f\|_{H^{-1/2}(\p M)\times H^{-3/2}(\p M)}, 
\end{equation}
for all $f\in H^{-1/2}(\p M)\times H^{-3/2}(\p M)$. 

Finally, let us derive the integral identity which will be used to reconstruct the potential. To that end, let $f,g\in H^{3/2}(\p M)\times H^{1/2}(\p M)$, let $u=u^{f}\in H^2(M^{\text{int}})$ be the unique solution to 
the Dirichlet problem
\begin{equation}
\label{eq_2_20}
\begin{cases}
(\Delta^2+q)u=0 & \text{in}\quad M^{\text{int}},\\
\gamma u=f & \text{on}\quad \p M,
\end{cases}
\end{equation}
and let $v=v^{f}\in H^2(M^{\text{int}})$ be the unique solution to 
the Dirichlet problem
\begin{equation}
\label{eq_2_21}
\begin{cases}
\Delta^2v=0 & \text{in}\quad M^{\text{int}},\\
\gamma v=g & \text{on}\quad \p M.
\end{cases}
\end{equation}
By the definition of the Dirichlet--to--Neumann map \eqref{int_2_DN_map}, we get
\begin{equation}
\label{eq_2_22}
\langle \Lambda_q f,g \rangle_{H^{-3/2}(\p M)\times H^{-1/2}(\p M), H^{3/2}(\p M)\times H^{1/2}(\p M)}=\int_M(\Delta u^f)(\Delta v^g)dV+\int_M qu^fv^gdV, 
\end{equation}
and 
\begin{equation}
\label{eq_2_23}
\begin{aligned}
\langle \Lambda_0 g,&f \rangle_{H^{-3/2}(\p M)\times H^{-1/2}(\p M), H^{3/2}(\p M)\times H^{1/2}(\p M)}=\int_M(\Delta v^g)(\Delta u^f)dV\\
&=\int_M(\Delta v^g)(\Delta v^f)dV=\langle \Lambda_0 f,g \rangle_{H^{-3/2}(\p M)\times H^{-1/2}(\p M), H^{3/2}(\p M)\times H^{1/2}(\p M)}.
\end{aligned} 
\end{equation}
In the penultimate equality of \eqref{eq_2_23} we used the fact that the definition of the Dirichlet--to--Neumann map $\Lambda_0$ is independent of the choice of extension of $f\in H^{3/2}(\p M)\times H^{1/2}(\p M)$ to an $H^2(M^{\text{int}})$ element whose trace is equal to $f$. Considering the difference of  \eqref{eq_2_22} and \eqref{eq_2_23}, we obtain the following integral identity, 
\begin{equation}
\label{eq_2_24}
\langle (\Lambda_q-\Lambda_0) f,g \rangle_{H^{-3/2}(\p M)\times H^{-1/2}(\p M), H^{3/2}(\p M)\times H^{1/2}(\p M)}=\int_M quvdV, 
\end{equation}
where $u=u^f, v=v^g\in H^2(M^{\text{int}})$ are solutions to \eqref{eq_2_20} and \eqref{eq_2_21}, respectively. 

We would like to extend the Nachman--Street argument \cite{Nachman_Street_2010} to reconstruct the potential $q$ from the knowledge of the Dirichlet--to--Neumann map for the biharmonic operator and therefore, as in \cite{Nachman_Street_2010}, we shall work with $L^2(M)$ solutions rather than $H^2(M^{\text{int}})$ solutions to the Dirichlet problems \eqref{eq_2_20},  \eqref{eq_2_21}. Thus, we shall need to extend the integral identity \eqref{eq_2_24} to such solutions. In doing so, we first claim that $\Lambda_q-\Lambda_0$ extends to a linear continuous map
\begin{equation}
\label{eq_2_25}
\Lambda_q-\Lambda_0: H^{-1/2}(\p M)\times H^{-3/2}(\p M) \to H^{1/2}(\p M)\times H^{3/2}(\p M).
\end{equation}
To that end, letting $f,g\in C^\infty(\p M)\times C^\infty(\p M)$, we conclude from \eqref{eq_2_24}, \eqref{eq_2_10}, and \eqref{eq_2_18_1} that 
\begin{align*}
|\langle (\Lambda_q-\Lambda_0) f,g \rangle_{L^2(\p M)\times L^2(\p M), L^2(\p M)\times L^2(\p M)}|\le C\|u\|_{L^2(M)}\|v\|_{L^2(M)}\\
\le C\|f\|_{H^{-1/2}(\p M)\times H^{-3/2}(\p M)} \|g\|_{H^{-1/2}(\p M)\times H^{-3/2}(\p M)}. 
\end{align*}
Hence, 
\[
\|(\Lambda_q-\Lambda_0) f\|_{H^{1/2}(\p M)\times H^{3/2}(\p M)}\le C\|f\|_{H^{-1/2}(\p M)\times H^{-3/2}(\p M)},
\]
which together with the density of $C^\infty(\p M)\times C^\infty(\p M)$ in the space $H^{-1/2}(\p M)\times H^{-3/2}(\p M)$ gives the claim \eqref{eq_2_25}.

Now letting $f,g\in H^{-1/2}(\p M)\times H^{-3/2}(\p M)$, approximating them by $C^\infty(\p M)\times C^\infty(\p M)$--functions, using \eqref{eq_2_25}, \eqref{eq_2_10}, and \eqref{eq_2_18_1}, we obtain from \eqref{eq_2_24} that 
\begin{equation}
\label{eq_2_26}
\langle (\Lambda_q-\Lambda_0) f,g \rangle_{H^{1/2}(\p M)\times H^{3/2}(\p M), H^{-1/2}(\p M)\times H^{-3/2}(\p M)}=\int_M quvdV, 
\end{equation}
where $u=u^f, v=v^g\in L^2(M)$ are solutions to \eqref{eq_2_20} and \eqref{eq_2_21}, respectively.

\section{The Nachman--Street argument for biharmonic operators}
\label{sec_Nachman-Street}

The goal of this section is to extend the Nachman--Street argument \cite{Nachman_Street_2010} for constructive determination of the boundary traces of  suitable complex geometric optics solutions, developed for the Schr\"odinger equation, to the case of the perturbed biharmonic equation. Specifically, we shall extend to the case of the perturbed biharmonic equation  the simplified version of the Nachman--Street argument, presented in \cite{Feizm_K_Oksan_Uhl} in the full data case in the setting of compact Riemannian manifolds with boundary admitting a limiting Carleman weight.

Let $(M,g)$ be a smooth compact Riemannian manifold of dimension $n\ge 3$ with smooth boundary $\p M$, and let $-h^2\Delta_g=-h^2\Delta$  be the semiclassical Laplace--Beltrami operator on $M$, where $h>0$ is a small semiclassical parameter.  Assume, as we may,  that $(M,g)$ is embedded in a compact smooth Riemannian manifold $(N,g)$ without boundary of the same dimension, and let $U$ be open in $N$ such that $M\subset U$. 
When $\varphi\in C^\infty(U;\R)$, we let 
 \[
P_\varphi=e^{\frac{\varphi}{h}}(-h^2\Delta)e^{-\frac{\varphi}{h}}
\]
be  the conjugated operator, and let $p_\varphi$ be its semiclassical principal symbol.  Following \cite{Kenig_Sjostrand_Uhlmann},  \cite{DKSaloU_2009}, we say that $\varphi\in C^\infty(U;\R)$ is a limiting Carleman weight for $-h^2\Delta$ on $(U,g)$ if $d\varphi\ne 0$ on $U$, and the Poisson bracket of $\Re p_\varphi$ and $\Im p_\varphi$ satisfies,
\[
\{\Re p_\varphi, \Im p_\varphi\}=0 \quad\text{when}\quad  p_\varphi=0. 
\]

Using Carleman estimates for $-h^2\Delta$, established in  \cite{DKSaloU_2009}, it was shown in \cite{Nachman_Street_2010}, see also \cite[Proposition 2.2]{Feizm_K_Oksan_Uhl},  that for all $0<h\ll 1$ and any $v\in L^2(M)$, there exists a unique solution $u\in  (\text{Ker} (P_\varphi))^\perp $ of the equation
\[
P_\varphi u=v\quad \text{in}\quad M^{\text{int}}.
\]
Here 
\[
\text{Ker}(P_\varphi)=\{u\in L^2(M): P_\varphi u=0\}.
\]
Based on this unique solution, the Green operator $G_\varphi$ for $P_\varphi$ was constructed in \cite{Nachman_Street_2010}, see also \cite[Theorem 2.3]{Feizm_K_Oksan_Uhl}, enjoying the following properties: for all $0<h\ll 1$, there exists a linear continuous operator $G_{\varphi}:L^2(M)\to L^2(M)$ such that 
\begin{equation}
\label{eq_3_1}
\begin{aligned}
&P_{\varphi} G_{ \varphi}=I  \text{ on }  L^2(M),  \quad
\|G_{ \varphi}\|_{\mathcal{L}(L^2(M), L^2(M))}=\mathcal{O}(h^{-1}), \\
&G_\varphi^*=G_{-\varphi},\quad 
G_{ \varphi} P_{\varphi}=I \text{ on }C^\infty_0(M^{\text{int}}).
 \end{aligned}
\end{equation}
Here $G_\varphi^*$ denotes  the $L^2(M)$--adjoint of $G_\varphi$.  Letting $P_\varphi^*$ be the formal $L^2(M)$--adjoint of $P_\varphi$, we see that $P_\varphi^*=P_{-\varphi}$.  Note also that  if $\varphi$ is a limiting Carleman weight for $-h^2\Delta$ then so is $-\varphi$. 

In this paper we shall work with the semiclassical biharmonic operator $(-h^2\Delta)^2$. We have 
 \[
P_\varphi^2=e^{\frac{\varphi}{h}}(-h^2\Delta)^2e^{-\frac{\varphi}{h}}.
\]
We shall use $G^2_\varphi: L^2(M)\to L^2(M)$ as Green's operator for $P_\varphi^2$. It follows from \eqref{eq_3_1} that $G^2_\varphi$ enjoys the following properties, 
\begin{equation}
\label{eq_3_2}
\begin{aligned}
&P_{\varphi}^2 G^2_{ \varphi}=I  \text{ on }  L^2(M),  \quad
\|G^2_{ \varphi}\|_{\mathcal{L}(L^2(M), L^2(M))}=\mathcal{O}(h^{-2}), \\
&(G^2_\varphi)^*=G^2_{-\varphi},\quad 
G^2_{ \varphi} P^2_{\varphi}=I \text{ on }C^\infty_0(M^{\text{int}}).
 \end{aligned}
\end{equation}
Furthermore, the first identity in \eqref{eq_3_2} implies that 
\begin{equation}
\label{eq_3_3}
G_\varphi^2:L^2(M)\to e^{\varphi/h}H_{\Delta^2}(M). 
\end{equation}

Next we shall proceed to introduce single layer operators associated to the Green operator $G_\varphi^2$. 
First note that the trace map $\gamma$ given by \eqref{eq_2_9_-1} has the following mapping properties,
\begin{equation}
\label{eq_3_3_1}
\gamma: e^{\pm\varphi/h}H_{\Delta^2(M)}\to e^{\pm\varphi/h}(H^{-1/2}(\p M)\times H^{-3/2}(\p M))=H^{-1/2}(\p M)\times H^{-3/2}(\p M),
\end{equation}
and therefore, using \eqref{eq_3_3}, we get 
\[
\gamma\circ G_\varphi^2:L^2(M)\to H^{-1/2}(\p M)\times H^{-3/2}(\p M)
\]
is continuous. This implies that the $L^2$--adjoint 
\begin{equation}
\label{eq_3_4_0}
(\gamma\circ G_\varphi^2)^*: H^{1/2}(\p M)\times H^{3/2}(\p M)\to L^2(M)
\end{equation}
is also continuous.  For any $g\in H^{1/2}(\p M)\times H^{3/2}(\p M)$, we have
\begin{equation}
\label{eq_3_4}
P_{-\varphi}^2((\gamma\circ G_\varphi^2)^*g)=0 \quad \text{in}\quad \mathcal{D}'(M^{\text{int}}).
\end{equation}
The proof is based on the following observation. Letting $f\in C^\infty_0(M^{\text{int}})$, using the fourth property in \eqref{eq_3_2}, we get 
\begin{align*}
(P_{-\varphi}^2&((\gamma\circ G_\varphi^2)^*g), f)_{L^2(M)}=((\gamma\circ G_\varphi^2)^*g, P_{\varphi}^2 f)_{L^2(M)}\\
&= (g, (\gamma\circ G_\varphi^2)P_{\varphi}^2 f)_{H^{1/2}(\p M)\times H^{3/2}(\p M),H^{-1/2}(\p M)\times H^{-3/2}(\p M)}=0.
\end{align*}

Now \eqref{eq_3_4} gives us the following mapping properties for the operator $(\gamma\circ G_\varphi^2)^*$, 
\[
(\gamma\circ G_\varphi^2)^*: H^{1/2}(\p M)\times H^{3/2}(\p M)\to e^{-\varphi/h}H_{\Delta^2}(M),
\]
which improves \eqref{eq_3_4_0}. Thus, in view of \eqref{eq_3_3_1}, we have that the map
\[
\gamma\circ(\gamma\circ G_\varphi^2)^*: H^{1/2}(\p M)\times H^{3/2}(\p M)\to H^{-1/2}(\p M)\times H^{-3/2}(\p M)
\]
is well defined and continuous, and therefore, its $L^2$--adjoint 
\[
(\gamma\circ(\gamma\circ G_\varphi^2)^*)^*: H^{1/2}(\p M)\times H^{3/2}(\p M)\to H^{-1/2}(\p M)\times H^{-3/2}(\p M)
\]
is also continuous.  We introduce the single layer operator associated to the Green operator $G_\varphi^2$ as follows: 
\begin{equation}
\label{eq_3_5}
\begin{aligned}
S_\varphi=&e^{-\varphi/h}(\gamma\circ(\gamma\circ G_\varphi^2)^*)^* e^{\varphi/h}\\
&\in \mathcal{L}(H^{1/2}(\p M)\times H^{3/2}(\p M), H^{-1/2}(\p M)\times H^{-3/2}(\p M)).
\end{aligned}
\end{equation}
Note that  definition \eqref{eq_3_5} looks similar to the corresponding single layer operator in the case of the Laplacian in \cite{Nachman_Street_2010}, see also \cite{Feizm_K_Oksan_Uhl}, with the only difference that here the Green operator is  $G_\varphi^2$ instead of $G_\varphi$ and the trace $\gamma$ has two components. 

Now in view of \eqref{eq_2_25} and \eqref{eq_3_5}, we have 
\[
S_\varphi(\Lambda_q-\Lambda_0): H^{-1/2}(\p M)\times H^{-3/2}(\p M) \to H^{-1/2}(\p M)\times H^{-3/2}(\p M).
\]
is continuous. We claim that 
\begin{equation}
\label{eq_3_6}
S_\varphi(\Lambda_q-\Lambda_0)=\gamma\circ e^{-\varphi/h}\circ G_\varphi^2\circ e^{\varphi/h}\circ q\circ \mathcal{P}_q
\end{equation}
in the sense of linear continuous operators on the space  $H^{-1/2}(\p M)\times H^{-3/2}(\p M)$. Here $\mathcal{P}_q$ is the Poisson operator given by \eqref{eq_2_19}. To see \eqref{eq_3_6}, letting $f,g\in C^\infty(\p M)\times C^\infty(\p M)$, we get 
\begin{align*}
\langle \gamma\circ & e^{-\varphi/h}\circ G_\varphi^2\circ e^{\varphi/h}\circ q\circ \mathcal{P}_q f, g\rangle_{H^{-1/2}(\p M)\times H^{-3/2}(\p M), H^{1/2}(\p M)\times H^{3/2}(\p M)}\\
&=
\langle q\circ \mathcal{P}_q f,  e^{\varphi/h}(\gamma\circ G_\varphi^2)^*e^{-\varphi/h}g\rangle_{L^2(M), L^2(M)}\\
&=\langle (\Lambda_q-\Lambda_0) f,\gamma\circ e^{\varphi/h}(\gamma\circ G_\varphi^2)^*e^{-\varphi/h}g\rangle_{H^{-1/2}(\p M)\times H^{-3/2}(\p M), H^{1/2}(\p M)\times H^{3/2}(\p M)}\\
&=\langle  S_\varphi(\Lambda_q-\Lambda_0) f, g\rangle_{H^{-1/2}(\p M)\times H^{-3/2}(\p M), H^{1/2}(\p M)\times H^{3/2}(\p M)},
\end{align*}
showing \eqref{eq_3_6}. 
Here in the penultimate equality, we used the fact that $\Delta^2(e^{\varphi/h}(\gamma\circ G_\varphi^2)^*e^{-\varphi/h}g)=0$ in $M^{\text{int}}$ in view of \eqref{eq_3_4} and the integral identity \eqref{eq_2_26}, and in the last equality we used \eqref{eq_3_5}.

Similar to \cite[Proposition 2.4]{Feizm_K_Oksan_Uhl}, we have the following result. 
\begin{prop}
\label{prop_2_4} 
Let $f,g\in H^{-1/2}(\p M)\times H^{-3/2}(\p M)$. Then 
\begin{equation}
\label{eq_3_7}
(1+h^4S_\varphi(\Lambda_q-\Lambda_0))f=g
\end{equation}
if and only if 
\begin{equation}
\label{eq_3_8}
(1+e^{-\varphi/h}\circ G_\varphi^2\circ e^{\varphi/h} h^4q)\mathcal{P}_q f=\mathcal{P}_0 g.
\end{equation}
\end{prop}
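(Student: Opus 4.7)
The plan is to prove both implications at once by constructing an explicit candidate for the $L^2$--function on the left-hand side of \eqref{eq_3_8} and identifying it as $\mathcal{P}_0 g$ via the uniqueness part of Theorem~\ref{thm_eskin}. Set $u := \mathcal{P}_q f \in H_q$, so that $(\Delta^2 + q)u = 0$ in $M^{\text{int}}$ with $\gamma u = f$, and put
\[
w := u + e^{-\varphi/h}\, G_\varphi^2\, e^{\varphi/h}\, h^4 q u \,\in\, L^2(M),
\]
which is exactly the left-hand side of \eqref{eq_3_8}. Since $\mathcal{P}_0 g$ is characterised in $L^2(M)$ as the unique solution to $\Delta^2 v = 0$ with $\gamma v = g$, it suffices to verify that (a) $\Delta^2 w = 0$ in $\mathcal{D}'(M^{\text{int}})$ and (b) $\gamma w = (1 + h^4 S_\varphi(\Lambda_q - \Lambda_0)) f$.

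For (a), the conjugation identity $P_\varphi^2 = h^4 e^{\varphi/h} \Delta^2 e^{-\varphi/h}$ rearranges to $\Delta^2(e^{-\varphi/h}\cdot) = h^{-4} e^{-\varphi/h} P_\varphi^2(\cdot)$, and the first property of $G_\varphi^2$ in \eqref{eq_3_2} then gives
\[
\Delta^2\bigl(e^{-\varphi/h}\, G_\varphi^2\, e^{\varphi/h}\, h^4 q u\bigr) \,=\, h^{-4}\, e^{-\varphi/h}\, P_\varphi^2\, G_\varphi^2\bigl(e^{\varphi/h}\, h^4 q u\bigr) \,=\, q u
\]
in the distributional sense on $M^{\text{int}}$. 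Combined with $\Delta^2 u = -q u$, this yields $\Delta^2 w = 0$; in particular $w \in H_{\Delta^2}(M)$, so $\gamma w$ is defined via Lemma~\ref{lem_trace_map}.

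For (b), linearity of $\gamma$ together with $\gamma u = f$ give
\[
\gamma w \,=\, f + h^4\, \gamma\bigl(e^{-\varphi/h}\, G_\varphi^2\, e^{\varphi/h}\, q\, \mathcal{P}_q f\bigr),
\]
and the already established identity \eqref{eq_3_6} identifies the second term with $h^4 S_\varphi(\Lambda_q - \Lambda_0) f$. Hence $\gamma w = (1 + h^4 S_\varphi(\Lambda_q - \Lambda_0)) f$. Since $\Delta^2 w = 0$ in $L^2(M)$, the uniqueness in Theorem~\ref{thm_eskin} (equivalently, the definition of $\mathcal{P}_0$ in \eqref{eq_2_19}) gives $w = \mathcal{P}_0 g$ if and only if $\gamma w = g$, which is precisely the equivalence between \eqref{eq_3_7} and \eqref{eq_3_8}. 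No genuine obstacle arises; the argument is essentially bookkeeping of the mapping properties already assembled in Sections~\ref{section_2}--\ref{sec_Nachman-Street}, with the only computational input being the conjugated biharmonic identity together with $P_\varphi^2 G_\varphi^2 = I$.
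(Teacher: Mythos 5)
Your proof is correct and follows essentially the same route as the paper: show that the left-hand side of \eqref{eq_3_8} is biharmonic using the first property in \eqref{eq_3_2}, compute its trace via \eqref{eq_3_6}, and invoke the uniqueness in Theorem \ref{thm_eskin}. Packaging both implications into the single statement $w=\mathcal{P}_0 g \iff \gamma w = g$ is a minor reorganization of the paper's two-directional argument, not a different method.
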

\begin{proof}
Assume first that \eqref{eq_3_7} holds. To show that \eqref{eq_3_8} holds, we first observe that $(h^2\Delta)^2\mathcal{P}_q f=-h^4q\mathcal{P}_q f$. Using the first property in \eqref{eq_3_2}, we also obtain that 
\begin{equation}
\label{eq_3_9}
(h^2\Delta)^2(1+e^{-\varphi/h}\circ G_\varphi^2\circ e^{\varphi/h} h^4q)\mathcal{P}_q f=0 \quad \text{in}\quad M^{\text{int}}.
\end{equation} 
Furthermore, \eqref{eq_3_6} and \eqref{eq_3_7} imply that 
\begin{equation}
\label{eq_3_9_1}
\gamma (1+e^{-\varphi/h}\circ G_\varphi^2\circ e^{\varphi/h} h^4q)\mathcal{P}_q f=f+ h^4S_\varphi(\Lambda_q-\Lambda_0))f=g.
\end{equation}
By the uniqueness result of Theorem \ref{thm_eskin} applied to \eqref{eq_3_9} and \eqref{eq_3_9_1}, we obtain \eqref{eq_3_8}. 

Now if \eqref{eq_3_8} holds then \eqref{eq_3_7} can be obtained by taking the trace $\gamma$ on both sides of \eqref{eq_3_8}. 
\end{proof}

The recovery of the boundary traces of suitable complex geometric optics solutions to the equation $(\Delta^2+q)u=0$ will be based on the following result, which is  similar to \cite[Proposition 2.5]{Feizm_K_Oksan_Uhl}. 
\begin{prop}
\label{prop_2_5} 
The operator  $1+h^4S_\varphi(\Lambda_q-\Lambda_0):H^{-1/2}(\p M)\times H^{-3/2}(\p M)\to H^{-1/2}(\p M)\times H^{-3/2}(\p M)$ is a linear homemorphism for all $0<h\ll 1$. 
\end{prop}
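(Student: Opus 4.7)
The plan is to transfer the invertibility question from the boundary to the interior using Proposition~\ref{prop_2_4}, and then to invert the interior operator $1+h^4 e^{-\varphi/h} G_\varphi^2 e^{\varphi/h} q$ by a Neumann series argument after conjugating by $e^{\varphi/h}$. The main preliminary step is the following \emph{key estimate}: on $L^2(M)$, the operator $T_h := h^4 G_\varphi^2 q$ satisfies
\[
\|T_h\|_{\mathcal{L}(L^2(M), L^2(M))} \le C h^4 \cdot h^{-2}\|q\|_{L^\infty(M)} = O(h^2),
\]
by the norm bound in \eqref{eq_3_2} and the boundedness of $q$. Hence for $0<h\ll 1$, the operator $1+T_h: L^2(M)\to L^2(M)$ is a linear homeomorphism with inverse given by the convergent Neumann series $\sum_{k\ge 0}(-T_h)^k$.

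For injectivity of $1+h^4 S_\varphi(\Lambda_q-\Lambda_0)$, I would take $f \in H^{-1/2}(\p M)\times H^{-3/2}(\p M)$ with $(1+h^4 S_\varphi(\Lambda_q-\Lambda_0))f=0$ and apply Proposition~\ref{prop_2_4} with $g=0$ to obtain
\[
(1+e^{-\varphi/h}G_\varphi^2 e^{\varphi/h}h^4 q)\mathcal{P}_q f=0 \quad\text{in } L^2(M).
\]
Setting $u=\mathcal{P}_q f \in L^2(M)$ and $v=e^{\varphi/h}u\in L^2(M)$, and using that $e^{\varphi/h}$ commutes with the multiplication operator $q$, I get the interior equation
\[
(1+T_h)v=0 \quad\text{in } L^2(M).
\]
The key estimate then forces $v=0$, so $u=0$, and finally $f=\gamma u = 0$ by \eqref{eq_2_19}. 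This yields injectivity.

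For surjectivity, given $g\in H^{-1/2}(\p M)\times H^{-3/2}(\p M)$, set $w=\mathcal{P}_0 g\in L^2(M)$ and define
\[
u:= e^{-\varphi/h}(1+T_h)^{-1}(e^{\varphi/h}w)\in L^2(M),
\]
which is legitimate by the key estimate. Writing $v=e^{\varphi/h}u$, so that $v+T_h v = e^{\varphi/h}w$, I would then verify that $(\Delta^2+q)u=0$ in $M^{\text{int}}$: applying $P_\varphi^2$ to this equation and using $P_\varphi^2 G_\varphi^2=I$ on $L^2(M)$ together with $P_\varphi^2(e^{\varphi/h}w)=h^4 e^{\varphi/h}\Delta^2 w=0$ (since $w\in H_0$) gives $h^4 e^{\varphi/h}(\Delta^2 u + qu)=0$, hence $u\in H_q$. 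In particular $\Delta^2 u = -qu\in L^2(M)$, so $u\in H_{\Delta^2}(M)$ and $f:=\gamma u\in H^{-1/2}(\p M)\times H^{-3/2}(\p M)$ is well defined with $\mathcal{P}_q f = u$. By construction $(1+e^{-\varphi/h}G_\varphi^2 e^{\varphi/h}h^4 q)\mathcal{P}_q f = w = \mathcal{P}_0 g$, and the reverse direction of Proposition~\ref{prop_2_4} gives $(1+h^4 S_\varphi(\Lambda_q-\Lambda_0))f=g$, proving surjectivity.

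Continuity of the inverse then follows from the open mapping theorem, since both spaces are Banach and $1+h^4 S_\varphi(\Lambda_q-\Lambda_0)$ is already known to be continuous. The only place I expect any subtlety is in the two-way verification that conjugation by $e^{\varphi/h}$ translates between the boundary equation \eqref{eq_3_7} and the $L^2$-equation $(1+T_h)v=e^{\varphi/h}\mathcal{P}_0 g$; beyond that, everything is driven by the $O(h^{-2})$ bound on $G_\varphi^2$ from \eqref{eq_3_2}, which makes the small parameter $h^4$ in front of $S_\varphi(\Lambda_q-\Lambda_0)$ sufficient to absorb all losses.
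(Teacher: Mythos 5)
Your proposal is correct and follows essentially the same route as the paper: the $\mathcal{O}(h^{-2})$ bound on $G_\varphi^2$ from \eqref{eq_3_2} makes $h^4 G_\varphi^2 q$ an $\mathcal{O}(h^2)$ perturbation invertible by Neumann series, and Proposition \ref{prop_2_4} transfers this to the boundary operator. The only difference is organizational: the paper packages the argument by showing that $1+e^{-\varphi/h}G_\varphi^2 e^{\varphi/h}h^4q$ is an isomorphism $H_q\to H_0$ and composing with $\mathcal{P}_q$ and $\mathcal{P}_0$, whereas you verify injectivity and surjectivity separately, which amounts to the same thing.
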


\begin{proof}
First using that $\|G_\varphi^2\|_{L^2(M)\to L^2(M)}=\mathcal{O}(h^{-2})$, see \eqref{eq_3_2}, we observe that the operator $1+e^{-\varphi/h}\circ G_\varphi^2\circ e^{\varphi/h} h^4q$ in \eqref{eq_3_8} is a linear homemorphism on $L^2(M)$ for all $0<h\ll 1$. Thus, for all $0<h\ll 1$ and for all $v\in L^2(M)$, the equation 
\[
(1+e^{-\varphi/h}\circ G_\varphi^2\circ e^{\varphi/h} h^4q)u=v \quad \text{in}\quad M^{\text{int}}
\]
has a unique solution $u\in L^2(M)$. Furthermore, if $v\in H_0$ then $u\in H_q$ by the first property of \eqref{eq_3_2}. Hence, for all $0<h\ll 1$, the operator  $1+e^{-\varphi/h}\circ G_\varphi^2\circ e^{\varphi/h} h^4q: H_q\to H_0$ is an isomorphism.  
It follows from \eqref{eq_2_19} that the operator $(1+e^{-\varphi/h}\circ G_\varphi^2\circ e^{\varphi/h} h^4q)\circ \mathcal{P}_q: H^{-1/2}(\p M)\times H^{-3/2}(\p M) \to H_0$ is an isomorphism for all $0<h\ll 1$. This together with Proposition \ref{prop_2_4} implies the claim. 
\end{proof}

\section{Proof of Theorem \ref{thm_main}}
\label{section_4}

Let $(M,g)$ be a CTA manifold so that $(M,g)\subset \subset (\R\times M_0^{\text{int}}, c(e\oplus g_0))$. Since $(M,g)$ is known, the transversal manifold $(M_0,g_0)$ as well as the conformal factor $c$ are also known. Therefore, the Dirichlet--to--Neumann map $\Lambda_0$ is also known.  Furthermore, we assume the knowledge of the Dirichlet--to--Neumann map $\Lambda_q$. Using the integral identity \eqref{eq_2_26}, we would like to reconstruct the potential $q$ from this data. 

Let $x=(x_1,x')$ be the local coordinates in $\R\times M_0$. We know from \cite{DKSaloU_2009}  that the function $\varphi(x)=x_1$ is a limiting Carleman weight for the semiclassical Laplacian $-h^2\Delta$. Our starting point is the following result about the existence of Gaussian beam quasimodes for the biharmonic operator, constructed on $M$ and localized to non-tangential geodesics on  the transversal manifold  $M_0$ times $\R_{x_1}$, established in \cite[Propositions 2.1, 2.2]{Yan_2020}. See also \cite{Babich_Buldyrev}, \cite{Ralston_1977}, \cite{Ralston_1982}, \cite{DKurylevLS_2016}, \cite{Krup_Liimatainen_Salo} for related constructions of Gaussian beam quasimodes for second order operators and applications to inverse boundary problems. 
\begin{thm}
\label{thm_Lili}
Let $s=\frac{1}{h}+i\lambda$, $0<h<1$, $\lambda\in \R$ and  let $\gamma:[0,L]\to M_0$ be a unit speed non-tangential geodesic on $M_0$. Then there are families of Gaussian beam quasimodes $v_s,w_s\in C^\infty(M)$ such that 
\begin{equation}
\label{eq_4_1}
\|v_s\|_{H^1_{\emph{\text{scl}}}(M^\emph{\text{int}})}=\mathcal{O}(1),\quad \|e^{sx_1}(h^2\Delta)^2 e^{-sx_1}v_s\|_{L^2(M)}=\mathcal{O}(h^{5/2}),
\end{equation}
\begin{equation}
\label{eq_4_2}
\|w_s\|_{H^1_{\emph{\text{scl}}}(M^\emph{\text{int}})}=\mathcal{O}(1),\quad \|e^{-sx_1}(h^2\Delta)^2 e^{sx_1}w_s\|_{L^2(M)}=\mathcal{O}(h^{5/2}),
\end{equation}
as $h\to 0$.  Furthermore, letting $\psi\in C(M_0)$, and letting $x_1\in \R$, we have
\begin{equation}
\label{eq_4_3}
\lim_{h\to 0} \int_{\{x_1\}\times M_0} v_s\overline{w_s}\psi dV_{g_0}=\int_0^L e^{-2\lambda t} c(x_1,\gamma(t))^{1-\frac{n}{2}}\psi(\gamma(t))dt. 
\end{equation}
\end{thm}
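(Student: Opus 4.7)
The plan is to construct the quasimodes $v_s$ and $w_s$ by a WKB Gaussian beam ansatz concentrated in a tubular neighbourhood of the non-tangential geodesic $\gamma$ in the transversal manifold $M_0$, exploiting the product structure $M \subset\subset \R \times M_0^{\text{int}}$ with $g = c(e \oplus g_0)$ and the fact that $\varphi(x) = x_1$ is a limiting Carleman weight for $-h^2\Delta_g$. Since the conjugation in the $x_1$--direction is purely algebraic, the only nontrivial construction is that of a transverse Gaussian beam for the semiclassical Laplacian on $M_0$; the biharmonic bound then follows from one further application of the same operator.

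Fixing $v_s$ to depend on $x_1$ only through an explicit scalar factor, the identity $-h^2(\p_{x_1}-s)^2 = -1 - 2i\lambda h + \lambda^2 h^2$ (acting on $x_1$--independent factors, with $s = \tfrac 1h + i\lambda$) reduces the conjugated Laplacian $e^{sx_1}(-h^2\Delta_g)e^{-sx_1}$ at leading order to a transverse operator whose principal symbol coincides with that of $-h^2\Delta_{g_0} - c(x)$, with subprincipal forcing of order $h$ proportional to $\lambda$. This is exactly the setting for a Gaussian beam on $(M_0,g_0)$ at energy $1$ after conformal absorption, analogous to the parallel second--order problem in \cite{DKurylevLS_2016} and the biharmonic framework of \cite{Yan_2020}. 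In Fermi coordinates $(t,y)$ along $\gamma$, $t \in [0,L]$ and $y \in \R^{n-2}$ transverse, I would take the ansatz
\begin{equation*}
v_s(x_1,t,y) = c(x_1,t,y)^{-(n-2)/4}\, h^{-(n-2)/4}\, e^{i\Theta(t,y)/h}\bigl(a_0 + h a_1 + \cdots + h^N a_N\bigr)\chi(y/\delta),
\end{equation*}
with complex phase $\Theta(t,y) = t + \tfrac 12\langle H(t) y, y\rangle$, where $H(t)$ is a complex symmetric matrix with $\operatorname{Im} H(t) > 0$ satisfying the matrix Riccati equation that makes $|\nabla\Theta|_{g_0}^2 = 1$ hold along $\gamma$ to second order in $y$, and $\chi$ is a smooth cutoff.

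The amplitudes $a_j$ are determined by linear first--order transport equations along $\gamma$, solved by Taylor expansion in $y$; the subprincipal transport for $a_0$ absorbs the $-2i\lambda$ from the $x_1$--conjugation and forces $a_0(t,0) = e^{-\lambda t}\cdot p(t)$, with $p(t)$ a geometric prefactor depending only on $\gamma$, $g_0$, and $H(t)$. For $N$ large enough the Laplacian error is $O(h^{N_1})$ with $N_1$ arbitrarily large, and since the error is itself of Gaussian--beam form with controlled semiclassical Sobolev norms, applying the conjugated Laplacian once more costs at most a bounded factor of $h^{-2}$, giving \eqref{eq_4_1} with remainder $O(h^{5/2})$. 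The construction of $w_s$ is identical with $s$ replaced by $-s$ throughout, producing $b_0(t,0) = e^{-\lambda t}\cdot p(t)$ of the same structure.

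Finally, the limit \eqref{eq_4_3} would be obtained by Laplace's method in the transverse variable. After the substitution $y = \sqrt h\, z$, the product $v_s \overline{w_s}$ becomes $h^{(n-2)/2}\,c(x_1,\gamma(t))^{-(n-2)/2}$ times a Gaussian $e^{-\langle \operatorname{Im} H(t) z, z \rangle}$ in $z$ times $a_0(t,0)\overline{b_0(t,0)}\psi(\gamma(t))$, up to lower--order contributions. The Gaussian integrates to $\pi^{(n-2)/2}(\det \operatorname{Im} H(t))^{-1/2}$, which is absorbed into the prefactor $p(t)$ of $a_0$ and $b_0$; the two factors $e^{-\lambda t}$ combine into $e^{-2\lambda t}$; and $c^{-(n-2)/2} = c^{1-n/2}$. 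Together with the transverse Jacobian between $dy$ and the induced measure, this produces the claimed $\int_0^L e^{-2\lambda t}\,c(x_1,\gamma(t))^{1-n/2}\,\psi(\gamma(t))\,dt$. The main obstacle is the bookkeeping: matching the $h^{(n-2)/4}$ normalizations, the conformal powers, and the Gaussian constants so that the limit assembles without spurious multiplicative factors, and also verifying that $\operatorname{Im} H(t)$ remains uniformly non--degenerate on $[0,L]$, which uses the non--tangentiality of $\gamma$ to extend the Riccati solution slightly past $\p M_0$ and to ensure that the cut--off quasimode is a genuine element of $C^\infty(M)$.
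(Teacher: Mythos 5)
The paper does not actually prove Theorem \ref{thm_Lili}: it is imported verbatim from \cite[Propositions 2.1, 2.2]{Yan_2020}, so the only fair comparison is with the construction in that reference. Your sketch follows the same overall strategy (conformal reduction to $\Delta_{e\oplus g_0}$, a complex-phase Gaussian beam in Fermi coordinates along $\gamma$ with $\operatorname{Im}H>0$ propagated by the Riccati equation, transport equations absorbing the $-2i\lambda h$ term to produce the $e^{-\lambda t}$ damping, and stationary phase in the transverse variable for \eqref{eq_4_3}), but with one genuine methodological difference: rather than deriving the eikonal/transport hierarchy for the fourth-order conjugated operator directly, you build an arbitrarily accurate quasimode for the second-order conjugated Laplacian and then apply the operator once more, accepting a crude $h^{-2}$ loss. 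This is legitimate and conceptually cleaner, provided you verify that the second-order error is small not just in $L^2$ but in $H^2_{\text{scl}}$ (it is, since the error is an explicit Gaussian-beam expression), and it costs you a higher-order construction than the minimal one that yields $\mathcal{O}(h^{5/2})$. Three points in your write-up deserve tightening. First, your reduction ``acting on $x_1$-independent factors'' is not literally applicable to your ansatz, since $c(x_1,t,y)^{-(n-2)/4}$ depends on $x_1$; the cross term $-2h^2s\,\partial_{x_1}$ hitting it is $\mathcal{O}(h)$ and would pollute the transport equation. The correct device is the conformal Laplacian identity $c^{\frac{n+2}{4}}\Delta_g(c^{-\frac{n-2}{4}}u)=(\Delta_{e\oplus g_0}+q_c)u$, applied twice for the biharmonic operator, which moves all $x_1$-dependence of $c$ into a harmless $\mathcal{O}(h^2)$ potential. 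Second, a non-tangential geodesic may self-intersect, so the quasimode must be a finite sum of beams glued by a partition of unity along $[0,L]$, and \eqref{eq_4_3} then requires a non-stationary phase argument to kill the cross terms between distinct passes through the same point of $M_0$; your single global Fermi chart does not exist in general. Third, the absence of spurious constants in \eqref{eq_4_3} is not mere bookkeeping: it is the identity $|a_0(t,0)|^2\,\pi^{(n-2)/2}(\det\operatorname{Im}H(t))^{-1/2}\equiv\text{const}$ along the flow, which follows from writing $H=Y'Y^{-1}$ with $Y$ solving the complexified Jacobi equation and $a_0\propto(\det Y)^{-1/2}$, and should be stated as such.
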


We shall use the Gaussian beam quasimodes of Theorem \ref{thm_Lili} to construct solutions $u_2, u_1\in L^2(M)$ to the biharmonic equation $\Delta^2 u_2=0$ and the perturbed biharmonic equation $(\Delta^2+q)u_1=0$ in $M$, which will be used to test the integral identity \eqref{eq_2_26}. Note that some solutions of the perturbed biharmonic equations based on the Gaussian beam quasimodes of Theorem \ref{thm_Lili} were constructed in \cite{Yan_2020}. Here our construction will  be different as we need to be able to reconstruct their traces $\gamma u_1=(u_1|_{\p M}, \p_\nu u_1|_{\p M})$. Specifically, our construction will be based on the Green operator $G^2_\varphi$ for the conjugated biharmonic operator $P^2_\varphi$. 

First,  let us define $u_2\in L^2(M)$  by 
\begin{equation}
\label{eq_4_4}
u_2=e^{sx_1}(w_s+\tilde r_2),
\end{equation}
where $w_s$ is the Gaussian beam quasimode given by Theorem \ref{thm_Lili} and $\tilde r_2\in L^2(M)$ is the remainder term. 
Now $u_2$ solves $\Delta^2 u_2=0$ if $\tilde r_2$ satisfies
\begin{equation}
\label{eq_4_5}
P^2_{-\varphi} e^{i\lambda x_1}\tilde r_2=-e^{i\lambda x_1}e^{-sx_1}h^4\Delta^2e^{sx_1}w_s.
\end{equation}
Looking for $\tilde r_2$ in the form $\tilde r_2=e^{-i\lambda x_1}G_{-\varphi}^2 r_2$ with $r_2\in L^2(M)$, we see from \eqref{eq_4_5} and \eqref{eq_3_2} that $r_2=-e^{i\lambda x_1}e^{-sx_1}h^4\Delta^2e^{sx_1}w_s$. It follows from \eqref{eq_4_2} that 
$\|r_2\|_{L^2(M)}=\mathcal{O}(h^{5/2})$, and therefore, using \eqref{eq_3_2}, we get 
\begin{equation}
\label{eq_4_6}
\|\tilde r_2\|_{L^2(M)}=\mathcal{O}(h^{1/2}),
\end{equation}
as $h\to 0$. 

Next we look for $u_1\in L^2(M)$ solving 
\begin{equation}
\label{eq_4_6_1}
(\Delta^2+q)u_1=0\quad \text{in}\quad  M^{\text{int}}
\end{equation}
 in the form,
\begin{equation}
\label{eq_4_7}
u_1=u_0+e^{-sx_1}\tilde r_1.
\end{equation}
Here $u_0\in L^2(M)$ is such that 
\begin{equation}
\label{eq_4_8}
\Delta^2 u_0=0\quad\text{in}\quad  M^{\text{int}},
\end{equation}
 and $u_0$ has the form, 
\begin{equation}
\label{eq_4_9}
u_0=e^{-sx_1}(v_s+\tilde r_0),
\end{equation}
where $v_s$ is the Gaussian beam quasimode given by Theorem \ref{thm_Lili}, and $\tilde r_0, \tilde r_1\in L^2(M)$ are the remainder terms. First in view of \eqref{eq_4_8}, $\tilde r_0$ should satisfy 
\begin{equation}
\label{eq_4_10}
P_\varphi^2 e^{-i\lambda x_1}\tilde r_0=-e^{-i\lambda x_1}e^{sx_1}h^4\Delta^2 e^{-sx_1}v_s.
\end{equation}
Looking for $\tilde r_0$ in the form $\tilde r_0=e^{i\lambda x_1}G_\varphi^2 r_0$, we conclude from \eqref{eq_4_10} that 
\[
r_0=-e^{-i\lambda x_1}e^{sx_1}h^4\Delta^2 e^{-sx_1}v_s.
\]
Thus, it follows from \eqref{eq_4_1} that $\|r_0\|_{L^2(M)}=\mathcal{O}(h^{5/2})$, and therefore, using \eqref{eq_3_2}, we obtain that 
\begin{equation}
\label{eq_4_11}
\|\tilde r_0\|_{L^2(M)}=\mathcal{O}(h^{1/2}),
\end{equation}
as $h\to 0$.  Now $u_1$ given by \eqref{eq_4_7} is a solution to \eqref{eq_4_6_1} provided that 
\begin{equation}
\label{eq_4_12}
(P_\varphi^2+h^4q)e^{-i\lambda x_1}\tilde r_1=-h^4e^{\varphi/h}qu_0 \quad\text{in}\quad  M^{\text{int}}.
\end{equation}
Looking for $\tilde r_1$ in the form $\tilde r_1=e^{i\lambda x_1}G_\varphi^2 r_1$ with $r_1\in L^2(M)$, we see from \eqref{eq_4_12} that 
\begin{equation}
\label{eq_4_13}
(1+ h^4qG_\varphi^2)r_1=-h^4e^{\varphi/h}qu_0 \quad\text{in}\quad  M^{\text{int}}.
\end{equation}
In view of  \eqref{eq_3_2}, \eqref{eq_4_9},  \eqref{eq_4_1}, and \eqref{eq_4_11},  for all $0<h\ll 1$, there exists a unique solution $r_1\in L^2(M)$ to  \eqref{eq_4_13} such that 
\[
\|r_1\|_{L^2(M)}=\mathcal{O}(h^4)\|e^{\varphi/h}u_0\|_{L^2(M)}=\mathcal{O}(h^4), 
\]
and therefore, 
\begin{equation}
\label{eq_4_14}
\|\tilde r_1\|_{L^2(M)}=\mathcal{O}(h^2). 
\end{equation}

Next we would like to reconstruct the boundary traces $\gamma u_1=(u_1|_{\p M}, \p_\nu u_1|_{\p M})$, where the complex geometric optics solution $u_1$ to \eqref{eq_4_6_1} is given by \eqref{eq_4_7},  from the knowledge of the Dirichlet--to--Neumann map $\Lambda_q$.  First we claim that $u_1$ satisfies the equation 
\begin{equation}
\label{eq_4_15}
(1+ h^4e^{-\varphi/h}G_\varphi^2 qe^{\varphi/h})u_1=u_0.
\end{equation}
Indeed, applying the operator $G_\varphi^2$ to \eqref{eq_4_13} and then multiplying it by $e^{-\varphi/h}$, we get 
\begin{equation}
\label{eq_4_16}
e^{-sx_1}\tilde r_1+ h^4e^{-\varphi/h}G_\varphi^2 qe^{\varphi/h} u_1=0.
\end{equation}
Adding $u_0$ to both sides of \eqref{eq_4_16} gives us \eqref{eq_4_15}. 

Using Proposition \ref{prop_2_4}, we obtain from \eqref{eq_4_15} that $f=\gamma u_1\in H^{-1/2}(\p M)\times H^{-3/2}(\p M)$ satisfies the boundary integral equation  
\begin{equation}
\label{eq_4_17}
(1+ h^4S_\varphi(\Lambda_q-\Lambda_0))f=\gamma u_0. 
\end{equation}
Since $(M,g)$ is known, $u_0$ and therefore, $\gamma u_0$ are also known as well as the single layer operator $S_\varphi$, and the Dirichlet--to--Neumann map $\Lambda_0$. Furthermore, Dirichlet--to--Neumann map  $\Lambda_q$ is known as well.  By Proposition \ref{prop_2_5}, for all $0<h\ll 1$, the boundary trace $f=\gamma u_1$ can be reconstructed as the unique solution to \eqref{eq_4_17}. 

Now substituting $u_1$ and $u_2$, given by \eqref{eq_4_7} and \eqref{eq_4_4}, respectively,  into the integral identity \eqref{eq_2_26}, we get 
\begin{equation}
\label{eq_4_18}
\int_M q u_1\overline{u_2}dV=\langle (\Lambda_q-\Lambda_0)\gamma u_1, \gamma \overline{u_2} \rangle_{H^{1/2}(\p M)\times H^{3/2}(\p M), H^{-1/2}(\p M)\times H^{-3/2}(\p M)}.
\end{equation}
Now as $u_2$ solves $\Delta^2u_2=0$ in $M^{\text{int}}$, it is a known function. This together with the reconstruction of $\gamma u_1$ shows that the expression in the right hand side of \eqref{eq_4_18} can be reconstructed from our data. Thus, we can reconstruct the integral 
\begin{equation}
\label{eq_4_19}
\begin{aligned}
\int_M q u_1\overline{u_2}dV= \int_M qe^{-2i\lambda x_1}(\overline{w_s}v_s+\overline{\tilde r_2}(v_s+\tilde r_0+\tilde r_1)+\overline{w_s}(\tilde r_0+\tilde r_1))dV\\
=\int_M qe^{-2i\lambda x_1}\overline{w_s}v_sdV+ \mathcal{O}(h^{1/2}).
\end{aligned}
\end{equation}
Here we have used  \eqref{eq_4_7}, \eqref{eq_4_9}, \eqref{eq_4_4}, \eqref{eq_4_1}, \eqref{eq_4_2}, \eqref{eq_4_6}, \eqref{eq_4_11}, and \eqref{eq_4_14}. 

By Theorem \ref{thm_boundary}, we can determine $q|_{\p M}$ from the knowledge of $\Lambda_q$ and $(M,g)$ in a constructive way. Thus, we extend $q$ to a function in $C_0(\R\times M_0^{\text{int}})$ in such a way that $q|_{(\R\times M_0)\setminus M}$ is known. This together with \eqref{eq_4_19} and $dV=c^{\frac{n}{2}}dx_1dV_{g_0}$ allows us to reconstruct
\begin{equation}
\label{eq_4_20}
\int_\R e^{-2i\lambda x_1} \int_{M_0} q(x_1,x')\overline{w_s(x_1,x')}v_s(x_1,x')c(x_1,x')^{n/2}dV_{g_0}dx_1+ \mathcal{O}(h^{1/2}).
\end{equation}
Letting $h\to 0$ in \eqref{eq_4_20}, and using \eqref{eq_4_3}, we obtain from  \eqref{eq_4_20} that 
\begin{equation}
\label{eq_4_21}
\int_\R e^{-2i\lambda x_1}\int_0^L e^{-2\lambda t} q(x_1,\gamma(t))c(x_1,\gamma(t))dtdx_1
=\int_0^L \hat{\tilde q}(2\lambda, \gamma(t)) e^{-2\lambda t} dt,
\end{equation}
for any $\lambda\in \R$ and any non-tangential geodesic $\gamma$ in $M_0$. Here $\tilde q=qc$ and 
\[
\hat{\tilde q}(\lambda, x')=\int_\R e^{-i\lambda x_1}\tilde q(x_1,x')dx_1. 
\]
The integral in the right hand side of \eqref{eq_4_21} is the attenuated geodesic ray transform of $\hat{\tilde q}(2\lambda,\cdot)$ with constant attenuation $-2\lambda$. Note that if $M_0$ is simple then it was shown in \cite{Salo_Uhlmann_2011} that the attenuated ray transform is constructively invertible for any attenuation, and using the inversion procedure in \cite{Salo_Uhlmann_2011}, we reconstruct the potential $q$. 

In general, proceeding similarly to the end of the proof of \cite[Theorem 1.4]{Feizm_K_Oksan_Uhl}, using the constructive invertibility assumption of the geodesic ray transform on $M_0$, we reconstruct the potential $q$ in $M$. This completes the proof of Theorem \ref{thm_main}.

\begin{appendix}
\section{Boundary reconstruction of a continuous potential for the perturbed biharmonic operator}
\label{appendix}

The goal of this appendix is to give a reconstruction formula for the boundary values of a continuous potential $q$ from the knowledge of the Dirichlet--to--Neumann map for the perturbed biharmonic operator $\Delta^2+q$ on a smooth compact Riemannian manifold of dimension $n\ge 2$ with smooth boundary.  In the case of Schr\"odinger operator, the constructive determination of the boundary values of a continuous potential from boundary measurements is given in \cite[Appendix A]{Feizm_K_Oksan_Uhl}, and our reconstruction here will rely crucially on this work.  For the non-constructive boundary determination of a continuous potential in the case of the Schr\"odinger operator, we refer to the works \cite{Guillarmou_Tzou_2011}, \cite{Krup_Uhlmann_2020}, \cite{Ma_Tzou_2021}.  Our result is as follows. 
\begin{thm}
\label{thm_boundary}
Let $(M,g)$ be a given compact smooth Riemannian manifold of dimension $n\ge 2$ with smooth boundary, and let $q\in C(M)$ be such that assumption (A) is satisfied.  For each point $x_0\in \p M$, there exists an explicit family of functions $f_\lambda\in C^\infty(\p M)\times C^\infty(\p M)$, $0<\lambda\ll 1$, depending only on $(M,g)$, such that 
\[
q(x_0)=2\lim_{\lambda\to 0} \langle (\Lambda_q-\Lambda_0) f_\lambda,\overline{f_\lambda} \rangle_{H^{-3/2}(\p M)\times H^{-1/2}(\p M), H^{3/2}(\p M)\times H^{1/2}(\p M)}.
\]
\end{thm}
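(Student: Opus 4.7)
The plan is to apply the integral identity \eqref{eq_2_24} with boundary data $f_\lambda$ engineered to force highly oscillatory $L^2$ solutions concentrated at the given boundary point $x_0$, and to read off $q(x_0)$ from the leading term as $\lambda\to 0$. I work in boundary normal coordinates $(x',x_n)$ at $x_0$, with $x_n\ge 0$ inside $M$, in which $\Delta_g = \p_{x_n}^2 + H\p_{x_n} + \Delta_t$, cf.\ \eqref{eq_2_1}. Since the Euclidean half-space biharmonic functions include both $e^{-x_n/\lambda}e^{ix'\cdot\xi_0'/\lambda}$ and $x_n\, e^{-x_n/\lambda}e^{ix'\cdot\xi_0'/\lambda}$ whenever $|\xi_0'|=1$, these serve as the WKB building blocks. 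Fix a unit covector $\xi_0'$ at $x_0$ and cutoffs $\chi\in C_c^\infty(\R)$, $\eta\in C_c^\infty(\R^{n-1})$ with $\chi(0)=1$ and $\|\eta\|_{L^2(\R^{n-1})}=1$, and set
\[
v_\lambda(x) = \lambda^{-(n+1)/4}\,\chi(x_n)\,\eta\bigl(\tfrac{x'-x_0'}{\sqrt\lambda}\bigr)\bigl(1+\lambda\,a_1(x)\, x_n+\cdots\bigr)\,e^{-x_n/\lambda}\,e^{i x'\cdot\xi_0'/\lambda},
\]
where the subleading correctors $a_j$, built from the second half-space biharmonic mode $x_n e^{-x_n/\lambda}$, are determined recursively so as to cancel the top-order contributions of $\Delta_g^2 v_\lambda$ arising from $H$ and the Christoffel symbols. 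Truncating outside the chart and setting $f_\lambda := \gamma v_\lambda\in C^\infty(\p M)\times C^\infty(\p M)$ produces the desired family, explicit in the geometry of $(M,g)$ alone.

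Next, let $u_\lambda\in L^2(M)$ be the unique solution of $(\Delta^2+q)u_\lambda=0$ with $\gamma u_\lambda=f_\lambda$ (Corollary~\ref{cor_trace_bijective}) and $v_\lambda^\sharp$ the unique $L^2(M)$ solution of $\Delta^2 v_\lambda^\sharp=0$ with $\gamma v_\lambda^\sharp=\overline{f_\lambda}$ (Theorem~\ref{thm_eskin}). Writing $u_\lambda = v_\lambda + U_\lambda$ and $v_\lambda^\sharp = \overline{v_\lambda} + V_\lambda$, the remainders satisfy $\gamma U_\lambda=\gamma V_\lambda=0$ together with $(\Delta^2+q)U_\lambda=-(\Delta^2+q)v_\lambda$ and $\Delta^2 V_\lambda=-\Delta^2\overline{v_\lambda}$ in $M^{\text{int}}$. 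By assumption (A) and boundary elliptic regularity, see \cite[Theorem 11.14]{Grubb_book}, both Dirichlet problems with zero traces are well posed, and the calibration of the $a_j$ ensures that the right-hand sides are controlled in a sufficiently negative Sobolev norm so that $\|U_\lambda\|_{L^2(M)}+\|V_\lambda\|_{L^2(M)}\to 0$, while $\|v_\lambda\|_{L^2(M)}$ stays uniformly bounded. Substituting into the integral identity \eqref{eq_2_24} and applying Cauchy--Schwarz to the cross terms yields
\[
\langle(\Lambda_q-\Lambda_0)f_\lambda,\overline{f_\lambda}\rangle = \int_M q\,u_\lambda\,v_\lambda^\sharp\,dV = \int_M q\,|v_\lambda|^2\,dV + o(1).
\]

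Finally, the main term is evaluated by the change of variables $y_n = x_n/\lambda$, $y'=(x'-x_0')/\sqrt\lambda$: the Jacobian $\lambda^{(n+1)/2}$ cancels the normalizing factor $\lambda^{-(n+1)/2}$ in $|v_\lambda|^2$, the oscillatory phase disappears in the modulus, and continuity of $q$ at $x_0$ combined with $\chi(0)=1$ and $\|\eta\|_{L^2}=1$ gives, by dominated convergence,
\[
\int_M q\,|v_\lambda|^2\,dV\ \longrightarrow\ q(x_0)\int_0^\infty e^{-2y_n}\,dy_n = \tfrac12\, q(x_0).
\]
This delivers $q(x_0) = 2\lim_{\lambda\to 0}\langle(\Lambda_q-\Lambda_0)f_\lambda,\overline{f_\lambda}\rangle$. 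The principal obstacle is the fourth-order nature of the operator: a single-layer WKB ansatz makes $\Delta_g^2 v_\lambda$ as large as $\lambda^{-3}$ in $L^2(M)$, so without the corrector terms the remainders $U_\lambda,V_\lambda$ would not be small enough to be dominated by the main term. Introducing the second biharmonic mode $x_n e^{-x_n/\lambda}$ as the first subleading corrector, in analogy with the paired Poisson kernel construction used already in the second-order setting \cite{Feizm_K_Oksan_Uhl}, is precisely what allows one to beat this error and close the argument.
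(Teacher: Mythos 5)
Your overall strategy --- test the integral identity \eqref{eq_2_24} with the same boundary datum in both slots, built from an oscillatory profile $e^{-x_n/\lambda}e^{ix'\cdot\xi_0'/\lambda}$ concentrating at $x_0$, and evaluate $\int_M q|v_\lambda|^2\,dV\to\frac12 q(x_0)$ --- is the same as the paper's. The essential difference, and the gap, is in how the exact biharmonic function with known trace is produced. You propose to make $\Delta_g^2 v_\lambda$ small by an amplitude corrector hierarchy $(1+\lambda a_1 x_n+\cdots)$ and then absorb the error into remainders $U_\lambda,V_\lambda$ with zero Dirichlet traces. This step is only asserted, and as described it does not close: the dominant contribution to $\Delta_g^2 v_\lambda$ is $\lambda^{-4}\bigl(1-g^{\alpha\beta}(x)\xi'_\alpha\xi'_\beta\bigr)^2$ times the amplitude, i.e.\ the \emph{square} of the eikonal error caused by the variation of the metric away from $x_0$. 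This is an error in the phase, not in the amplitude, so no finite amplitude corrector built from the second mode $x_n e^{-x_n/\lambda}$ can remove it; with your $\sqrt{\lambda}$ transverse scaling it is of size $\lambda^{-3}$ in $L^2(M)$, and the a priori estimate for the fourth-order Dirichlet problem (which, as in \eqref{eq_app_14}, is only available for $s>3/2$, so at best $\|V_\lambda\|_{L^2}\lesssim\|\Delta^2 v_\lambda\|_{H^{-5/2+\epsilon}}$) gains at most $\lambda^{5/2}$ from the oscillation --- leaving a remainder of size $\lambda^{-1/2}$, which swamps the $O(1)$ main term. Fixing this would require either a genuinely curved phase solving the eikonal equation or a different balance of scales, neither of which is in your write-up.

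The paper avoids the issue entirely by never estimating $\Delta^2 v_\lambda$: it sets $v=v_\lambda+r_1$ where $r_1\in H^1_0(M^{\text{int}})$ solves the \emph{second-order} problem $-\Delta r_1=\Delta v_\lambda$, $r_1|_{\p M}=0$, as in \eqref{eq_app_9}. Then $\Delta v=0$ exactly, hence $\Delta^2 v=0$ exactly, and the only estimate needed is $\|r_1\|_{L^2}=\mathcal{O}(\lambda^{1/12})$ (with $\alpha=1/3$), which is imported from the second-order analysis of \cite{Feizm_K_Oksan_Uhl}; the trace is then $f_\lambda=(v_\lambda|_{\p M},\p_\nu(v_\lambda+r_1)|_{\p M})$, explicit since $r_1$ depends only on $(M,g)$. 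The fourth-order a priori estimate is used only for the remainder $r_2$ in the perturbed equation, whose source $-q(v_\lambda+r_1)$ is already $o(1)$ in $H^{-1}(M^{\text{int}})$. I would recommend you adopt this reduction to the second-order problem rather than attempt the direct WKB construction for $\Delta_g^2$.
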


\begin{proof}
Let $f\in H^{3/2}(\p M)\times H^{1/2}(\p M)$ and let us start by considering the special case of the integral identity \eqref{eq_2_24},
\begin{equation}
\label{eq_app_2}
 \langle (\Lambda_q-\Lambda_0) f,\overline{f} \rangle_{H^{-3/2}(\p M)\times H^{-1/2}(\p M), H^{3/2}(\p M)\times H^{1/2}(\p M)}=\int_M qu \overline{v}dV.
\end{equation}
Here $u, v\in H^2(M^{\text{int}})$ are solutions to 
\begin{equation}
\label{eq_app_3}
\begin{cases}
(\Delta^2+q)u=0 & \text{in}\quad M^{\text{int}},\\
\gamma u=f & \text{on}\quad \p M,
\end{cases}
\end{equation}
and
\begin{equation}
\label{eq_app_4}
\begin{cases}
\Delta^2v=0 & \text{in}\quad M^{\text{int}},\\
\gamma v=f & \text{on}\quad \p M,
\end{cases}
\end{equation}
respectively.  

We would like to construct suitable solutions to \eqref{eq_app_3} and \eqref{eq_app_4} to test the integral identity \eqref{eq_app_2}. The construction of these solutions will be based on an explicit  family of functions $v_\lambda$, whose boundary values have a highly oscillatory behavior as $\lambda\to 0$, while becoming increasingly concentrated near a given point on the boundary of $M$. Such a family of functions $v_\lambda$  was introduced in  \cite{Brown_2001}, \cite{Brown_Salo_2006}, see also \cite{Feizm_K_Oksan_Uhl}, \cite{Krup_Uhlmann_2018}, \cite{Krup_Uhlmann_2020}, \cite{Krup_Uhlmann_2018_adv}. 

To define $v_\lambda$, we let $x_0\in \p M$ and let $(x_1,\dots, x_n)$ be the boundary normal coordinates centered at $x_0$ so that in these coordinates, $x_0 =0$, the boundary $\p M$ is given by $\{x_n=0\}$, and $M^{\text{int}}$ is given by $\{x_n > 0\}$.  In these local coordinates, we have $T_{x_0}\p M=\R^{n-1}$, equipped with the Euclidean metric.  The unit tangent vector $\tau$ is then given by $\tau=(\tau',0)$ where $\tau'\in \R^{n-1}$, $|\tau'|=1$.   Associated to the tangent vector $\tau'$ is the covector $\xi'_\alpha=\sum_{\beta=1}^{n-1} g_{\alpha \beta}(0) \tau'_\beta=\tau'_\alpha\in T^*_{x_0}\p M$. 

Let $\eta\in C^\infty_0(\R^n;\R)$ be such that $\supp(\eta)$ is in a small neighborhood of $0$, and 
\begin{equation}
\label{eq_app_5}
\int_{\R^{n-1}}\eta(x',0)^2dx'=1.
\end{equation}
Let $\frac{1}{3}\le \alpha\le \frac{1}{2}$. Following \cite{Brown_Salo_2006}, \cite[Appendix C]{Krup_Uhlmann_2020}, \cite[Appendix A]{Feizm_K_Oksan_Uhl} in the boundary normal coordinates,  we set 
\begin{equation}
\label{eq_app_6}
v_\lambda(x)= \lambda^{-\frac{\alpha(n-1)}{2}-\frac{1}{2}}\eta\bigg(\frac{x}{\lambda^{\alpha}}\bigg)e^{\frac{i}{\lambda}(\tau'\cdot x'+ ix_n)}, \quad 0<\lambda\ll 1,
\end{equation}
so that  $v_\lambda\in C^\infty(M)$, with $\supp(v_\lambda)$ in  $\mathcal{O}(\lambda^{\alpha})$ neighborhood of $x_0=0$. Here $\tau'$ is viewed as a covector.  A direct computation shows that 
\begin{equation}
\label{eq_app_7}
\|v_\lambda\|_{L^2(M)}=\mathcal{O}(1),  
\end{equation}
as $\lambda\to 0$, see also \cite[Appendix C]{Krup_Uhlmann_2020}.  Following \cite[Appendix A]{Feizm_K_Oksan_Uhl}, we let 
\begin{equation}
\label{eq_app_8}
v=v_\lambda+r_1,
\end{equation} 
where $r_1\in H^1_0(M^{\text{int}})$ is the solution to the Dirichlet problem, 
\begin{equation}
\label{eq_app_9}
\begin{cases}
-\Delta r_1=\Delta v_\lambda & \text{in}\quad M^{\text{int}},\\
r_1|_{\p M}=0.
\end{cases}
\end{equation} 
By boundary elliptic regularity, we have $r_1\in C^\infty(M)$, and therefore, $v\in C^\infty(M)$. 
It was established in \cite[Appendix A]{Feizm_K_Oksan_Uhl} that when $\alpha=1/3$, 
\begin{equation}
\label{eq_app_10}
\|r_1\|_{L^2(M)}=\mathcal{O}(\lambda^{1/12}),
\end{equation}
as $\lambda\to 0$. In what follows,  we fix $\alpha=1/3$. 

Note that $v\in C^\infty(M)$ solves the Dirichlet problem \eqref{eq_app_4} with 
\begin{equation}
\label{eq_app_11}
f=f_\lambda:=(v_\lambda|_{\p M}, \p_\nu(v_\lambda+r_1)|_{\p M}).
\end{equation}
Now since the manifold $(M,g)$ is known, the harmonic function $v$ as well as the trace $f_\lambda$ are known. 

Next we look for a solution $u$ to \eqref{eq_app_3} with the Dirichlet data $f=f_\lambda$ given by \eqref{eq_app_11} in the form
\begin{equation}
\label{eq_app_12}
u=v_\lambda+r_1+r_2.
\end{equation}
Thus, $r_2\in H^2(M^{\text{int}})$ is the solution to the following Dirichlet problem,
\begin{equation}
\label{eq_app_13}
\begin{cases}
(\Delta^2+q)r_2=-q(v_\lambda+r_1) & \text{in}\quad M^{\text{int}},\\
\gamma r_2=0 & \text{on}\quad \p M.
\end{cases}
\end{equation}
It follows from \cite[Section 11, p. 325, 326]{Grubb_book} that for all $s>3/2$, 
\begin{equation}
\label{eq_app_14}
\|r_2\|_{H^s(M^{\text{int}})}\le C\|q(v_\lambda+r_1)\|_{H^{s-4}(M^{\text{int}})}.
\end{equation}
In particular, letting $s=3$ in \eqref{eq_app_14}, we get 
\begin{equation}
\label{eq_app_15}
\begin{aligned}
\|r_2\|_{L^2(M)}\le C\|q(v_\lambda+r_1)\|_{H^{-1}(M^{\text{int}})}&\le C(\|q v_\lambda\|_{H^{-1}(M^{\text{int}})}+ \|r_1\|_{L^2(M)})\\
&=o(1)+\mathcal{O}(\lambda^{1/12})=o(1),
\end{aligned}
\end{equation}
as $\lambda\to 0$. Note that here we used the following bound 
\[
\|q v_\lambda\|_{H^{-1}(M^{\text{int}})}=o(1),
\]
as $\lambda\to 0$, cf. \cite[Appendix A, (A.20)]{Feizm_K_Oksan_Uhl},  
together with \eqref{eq_app_10}. 

Substituting $v$ and $u$ given by \eqref{eq_app_8} and \eqref{eq_app_12}, respectively, into \eqref{eq_app_2} and taking the limit $\lambda\to 0$, we obtain that 
\begin{equation}
\label{eq_app_16}
\lim_{\lambda\to 0} \langle (\Lambda_q-\Lambda_0) f_\lambda,\overline{f_\lambda} \rangle_{H^{-3/2}(\p M)\times H^{-1/2}(\p M), H^{3/2}(\p M)\times H^{1/2}(\p M)}=\lim_{\lambda\to 0} (I_1+I_2),
\end{equation}
where 
\[
I_1=\int_{M} q|v_\lambda|^2dV,\quad I_2=\int_{M} q(v_\lambda \overline{r_1}+(r_1+r_2)(\overline{v_\lambda}+\overline{r_1}))dV.
\]
Using \eqref{eq_app_10} and \eqref{eq_app_15}, we get 
\begin{equation}
\label{eq_app_17}
\lim_{\lambda\to 0}I_2=0.
\end{equation}
A direct computation shows that 
\begin{equation}
\label{eq_app_18}
\lim_{\lambda\to 0}I_1=\frac{1}{2}q(0),
\end{equation}
cf. \cite[Appendix A, (A.24)]{Feizm_K_Oksan_Uhl}. Combining \eqref{eq_app_16}, \eqref{eq_app_17}, and \eqref{eq_app_18}, we see that 
\[
q(0)=2\lim_{\lambda\to 0} \langle (\Lambda_q-\Lambda_0) f_\lambda,\overline{f_\lambda} \rangle_{H^{-3/2}(\p M)\times H^{-1/2}(\p M), H^{3/2}(\p M)\times H^{1/2}(\p M)}.
\]
This completes the proof of Theorem \ref{thm_boundary}. 
\end{proof}

\end{appendix}

\section*{Acknowledgements}

The author wishes to express her sincere thanks to Mikhail Belishev and Roman Novikov for the stimulating questions during her talk at the conference Days on Diffraction 2021, that motivated this paper. The author gratefully acknowledges the many helpful suggestions of Katya Krupchyk during the preparation of this paper. The author is partially supported by the National Science Foundation  (DMS 1815922 and DMS 2109199).

\end{document}